\numberwithin{equation}{section}
\def\Ddots{\mathinner{\mkern1mu\raise\p@
\vbox{\kern7\p@\hbox{.}}\mkern2mu
\raise4\p@\hbox{.}\mkern2mu\raise7\p@\hbox{.}\mkern1mu}}
\newcommand{\ds}{\displaystyle}
\DeclareMathOperator{\OSPT}{OSP}
\DeclareMathOperator{\SPT}{SP}
\DeclareMathOperator{\SSYT}{SSYT}
\renewcommand{\sp}{\mathrm{sp}}
\DeclareMathOperator{\s}{\mathrm{s}}
\renewcommand{\s}{\mathrm{s}}
\DeclareMathOperator{\sgn}{sgn}
\newcommand{\x}{\bar{x}}
\DeclareMathOperator{\hs}{hs}
\DeclareMathOperator{\wt}{wt}
\DeclareMathOperator{\spo}{spo}
\newtheorem{thm}{Theorem}[section]
\newtheorem{cor}[thm]{Corollary}
\newtheorem{lem}[thm]{Lemma}
\newtheorem{defn}[thm]{Definition}
\newtheorem{rem}[thm]{Remark}
\newtheorem{eg}[thm]{Example}
\title
{
A determinantal formula for orthosymplectic
Schur functions
}
\subjclass[2010]{05E05, 05E10}
\keywords{orthosymplectic Schur functions, hook Schur polynomials, determinantal formula}
\author{Nishu Kumari}
\address{Nishu Kumari, Department of Mathematics, 
Indian Institute of Science, Bangalore  560012, India.}
\email{nishukumari@iisc.ac.in}
\date{\today}
\begin{document}
\begin{abstract}
We prove a new determinantal formula for the characters of irreducible representations of  orthosymplectic  Lie  superalgebras analogous to the formula developed by Moens and Jeugt (J. Algebraic Combin., 2003) for general linear Lie superalgebras.
Our proof uses the Jacobi--Trudi type formulas for orthosymplectic characters. As a consequence, we show that 
the odd symplectic characters 
introduced by Proctor (Invent. Math., 1988) 
are the same as the 
orthosymplectic characters with some specialized indeterminates. 
We also give a generalization of an odd symplectic character identity due to Brent, Krattenthaler and Warnaar (J. Combin. Theory Ser. A, 2016).
\end{abstract}
\maketitle

\section{Introduction}
Hook Schur polynomials (or supersymmetric Schur polynomials) and orthosymplectic Schur polynomials are the characters of irreducible representations of general linear and orthosymplectic Lie superalgebras respectively. These are supersymmetric functions in two sets of commuting variables indexed by integer partitions.
It is well known that hook Schur polynomials form {a} basis for the ring of supersymmetric functions.
For background on the representation theory of Lie superalgebras, see~\cite{cheng2012dualities}.

Hook Schur polynomials were introduced by Berele and Regev~\cite{berele1987hook} combinatorially using hybrid tableaux involving a column-strict part and a row-strict part.
Stembridge~\cite{stembridge1985characterization} gave a different characterization for these polynomials and showed that they
form a basis for the ring of supersymmetric functions.
Moens and Jeugt~\cite{moens2003determinantal} proved the following determinantal formula for these polynomials using the representation theory of {the} general linear superalgebra $\mathfrak{gl}(m/n)$. 

\begin{thm}[{\cite[Equation 1.17]{moens2003determinantal}}]
\label{det-hook}
Let $\lambda$ be a partition such that $\lambda_{n+1} \leq m$
and {$k = \min \{i \mid \lambda_i + n + 1 - i \leq m\}.$} {The} hook Schur polynomial 
$\hs_{\lambda}(X;Y)$ is given by
\[\hs_{\lambda}(X;Y)
    =
\frac{(-1)^{mn-n+k-1}}{D} \det \left( \begin{array}{c|c}
   \left( \frac{1}{x_i+y_j}
\right)_{\substack{1 \leq i\leq n \\ 1\leq j\leq m}}  & 
\left(x_i^{\lambda_j+n-m-j}\right)_{\substack{1 \leq i\leq n \\ 1\leq j\leq k-1}} \\\\
\hline \\
 \left(y_j^{\lambda'_i+m-n-i}\right)_{\substack{1 \leq i\leq m-n+k-1 \\ 1\leq j\leq m}} & {\normalfont\text{\huge0}}
\end{array} \right),
\]
where \[
D= \frac{\prod_{1\leq i<j \leq n}(x_i-x_j)
\prod_{1 \leq i<j \leq m}(y_i-y_j)}{\prod_{i=1}^n \prod_{j=1}^m (x_i+y_j)}.
\]
\end{thm}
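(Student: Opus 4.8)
To prove Theorem~\ref{det-hook}, the plan is to start from the combinatorial identity
\[
\hs_\lambda(X;Y)=\sum_{\mu} s_\mu(X)\,s_{\lambda'/\mu'}(Y),
\]
where $\mu$ runs over partitions with at most $n$ parts and with $\lambda_j-\mu_j\le m$ for all $j$; this is immediate from the Berele--Regev hybrid-tableaux definition, splitting a hybrid tableau into its column-strict part of shape $\mu$ and its row-strict part of skew shape $\lambda/\mu$ (the row-strict fillings being counted by $s_{\lambda'/\mu'}(Y)$). The task is then to show that the $(m+k-1)\times(m+k-1)$ determinant on the right-hand side unfolds into exactly this sum, times $D$ and the sign. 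The engine is a Laplace (generalized Cauchy--Binet) expansion along the bottom $m-n+k-1$ rows: since those rows meet the zero block, only columns from the first block of $m$ columns may be selected there, so the expansion is a signed sum over subsets $\bar S\subseteq\{1,\dots,m\}$ of size $m-n+k-1$, and for each such $S:=\{1,\dots,m\}\setminus\bar S$ (of size $n-k+1$) one gets a sign times a product of two minors.

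I would first evaluate the minor in the $y$-variables: the rows $\bigl(y_j^{\lambda'_i+m-n-i}\bigr)$ restricted to the columns in $\bar S$ form a generalized Vandermonde, and the classical bialternant identity turns it into $s_\beta(y_{\bar S})\prod_{j,j'\in\bar S,\ j<j'}(y_j-y_{j'})$ with $\beta_i=\lambda'_i-(k-1)$ for $1\le i\le m-n+k-1$; here the definition of $k$ is precisely what forces $\lambda'_{m-n+k-1}\ge k-1$, so $\beta$ is a genuine partition and all exponents arising are nonnegative. The other minor is the $n\times n$ mixed Cauchy--Vandermonde determinant built from the $|S|$ columns $\bigl(\tfrac{1}{x_i+y_j}\bigr)_{j\in S}$ and the $k-1$ columns $\bigl(x_i^{\lambda_j+n-m-j}\bigr)_{1\le j\le k-1}$. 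Clearing $\prod_{j\in S}(x_i+y_j)$ from each row $i$ reduces it to $\bigl(\prod_{i=1}^n\prod_{j\in S}(x_i+y_j)\bigr)^{-1}$ times a polynomial determinant in the $x_i$; since the cleared Cauchy columns span all polynomials of degree $<|S|$, expanding in the monomial basis exhibits this as $\prod_{1\le i<i'\le n}(x_i-x_{i'})$ times $\prod_{j,j'\in S,\ j<j'}(y_j-y_{j'})$ times an explicit polynomial in $x$ and $y_S$ governed by the exponents $\lambda_j+n-m-j$.

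Assembling these over all $S$ produces $D$ times a signed sum over $S$ of $\bigl(\text{polynomial in }x,y_S\bigr)\cdot s_\beta(y_{\bar S})$, and it remains to identify this with $(-1)^{mn-n+k-1}\sum_\mu s_\mu(X)\,s_{\lambda'/\mu'}(Y)$. For this I would expand each $s_\mu(X)$ as a bialternant in the $x$'s and split each $s_{\lambda'/\mu'}(Y)$ over the decomposition $Y=y_S\sqcup y_{\bar S}$ via the standard formula for a skew Schur polynomial in a union of two alphabets; the admissibility constraints on $\mu$ in the Berele--Regev expansion should match exactly the requirement that the selected columns land in $\bar S$. The prefactor $(-1)^{mn-n+k-1}$ comes out of bookkeeping the Laplace sign, the sign of the reordering $S\sqcup\bar S\mapsto\{1,\dots,m\}$, and the signs in the bialternant evaluations.

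The step I expect to be the main obstacle is this evaluation-and-matching. Because the exponents $\lambda_j+n-m-j$ are in general \emph{not} consecutive, the mixed Cauchy--Vandermonde minor does not collapse to a single Schur polynomial but to a genuine sum over several partitions, so the identification with $\sum_\mu s_\mu(X)\,s_{\lambda'/\mu'}(Y)$ is not term-by-term and needs a careful bijective or telescoping argument organized around the atypicality index $k$, on top of the sign bookkeeping. An alternative --- the route actually taken by Moens and Jeugt --- avoids this entirely: realize $\hs_\lambda$ as the character of the irreducible covariant $\mathfrak{gl}(m/n)$-module $V_\lambda$, resolve $V_\lambda$ by a Bernstein--Gelfand--Gelfand-type complex of Kac (induced) modules whose length is governed by the atypicality of $\lambda$ (this is where $k$ enters), and observe that each Kac-module character is a single ratio of alternating sums, so that the alternating sum of these characters telescopes into the stated determinant, the sign emerging from the $\mathfrak{gl}(m/n)$ Weyl-vector shift and the parity of the complex.
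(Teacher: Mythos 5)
Your outline stops exactly where the content of the theorem lies, so as it stands it is not a proof. After the Laplace expansion along the bottom block you need two things: (i) an evaluation of the $n\times n$ mixed Cauchy--Vandermonde minor with columns $\bigl(\tfrac{1}{x_i+y_j}\bigr)_{j\in S}$ and $\bigl(x_i^{\lambda_j+n-m-j}\bigr)_{j<k}$, and (ii) an identification of the resulting signed sum over subsets $S$ with $\sum_\mu \s_\mu(X)\,\s_{\lambda'/\mu'}(Y)$. For (i) your intermediate claim is already off: when the exponents $\lambda_j+n-m-j$ are not consecutive, this minor does \emph{not} factor as $\prod_{i<i'}(x_i-x_{i'})\prod_{j<j'\in S}(y_j-y_{j'})$ times a single ``explicit polynomial''; it is a genuine alternating sum of Schur-type terms mixing the $x$'s and the $y_S$'s, so the ``assembly'' you describe is not a bookkeeping step but a new nontrivial identity. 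For (ii), note that the bottom minor contributes one fixed shape $\beta=(\lambda'_i-(k-1))_i$ in the alphabet $y_{\bar S}$, whereas splitting $\s_{\lambda'/\mu'}(y_S\sqcup y_{\bar S})$ produces skew Schur functions in $y_{\bar S}$ of varying shapes summed over an intermediate partition; you give no mechanism (bijective, telescoping, or interpolation-theoretic) by which these two sums coincide, and you explicitly concede this is the main obstacle. Since that matching is essentially equivalent to the theorem itself, the proposal has a genuine gap; the correct parts (the Berele--Regev expansion, the Laplace expansion structure, and the observation that the definition of $k$ makes $\beta$ a partition and the exponents $\lambda_j+n-m-j$, $j<k$, nonnegative) are the routine parts.

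Two remarks on routes to an actual proof. The present paper does not prove \cref{det-hook} at all --- it is imported from Moens and Van der Jeugt, whose argument is the representation-theoretic one you sketch at the end (character formula via a resolution by Kac modules, with $k$ measuring atypicality); sketching that in one sentence does not discharge it either. If you want an elementary proof in the spirit you attempted, a more tractable route is the one this paper uses for the orthosymplectic analogue in \cref{sec:ortho}: start from the super Jacobi--Trudi formula $\hs_\lambda(X;Y)=\det\bigl(H_{\lambda_i-i+j}(X;Y)\bigr)$, multiply on the right by the unitriangular matrix $\bigl((-1)^{v-u}e_{v-u}(X)\bigr)$ to convert rows with $\lambda_i+n-i\ge 0$ into monomial rows divided by the Vandermonde, use $\sum_{p\ge 0}x^pe_p(Y)=\prod_q(1+xy_q)$ on the rows with $\lambda_i+n-i\ge m$ (these are exactly the rows $i<k$), invoke \cref{lem:separate} to complement the remaining exponents inside $\{0,\dots,m-1\}$, and then perform the same sequence of column operations used in the proof of \cref{thm:main} to manufacture the Cauchy block $\bigl(\tfrac{1}{x_i+y_j}\bigr)$ and the block $\bigl(y_j^{\lambda'_i+m-n-i}\bigr)$, tracking the sign $(-1)^{mn-n+k-1}$ through the row/column permutations. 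That avoids the subset-matching problem entirely, because the Cauchy block is produced by column operations on a single determinant rather than reconstructed from a Laplace expansion.
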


The main result of this article is to prove the  following determinantal formula for orthosymplectic Schur polynomials analogous to the formula in \cref{det-hook} for hook Schur polynomials. 

\begin{thm} 
\label{thm:main}
Let $\lambda$ be a partition of length at most $n$ and {$k = \min \{j \mid \lambda_j + n + 1 - j \leq m\}.$} {The} orthosymplectic Schur function $\spo_{\lambda}(X;Y)$ is given by
\begin{equation}
\label{det-ortho}
    \spo_{\lambda}(X;Y)
    =
\frac{(-1)^{mn-n+k-1}}{D} 
\det \left( \begin{array}{c|c}
   R  & X_{\lambda} \\[-0.3cm]\\\hline\\[-0.3cm]
   Y_{\lambda}  & 0
\end{array} \right)
\end{equation}
where 
\[
R= \left( \frac{x_i}
{(x_i+y_j)\prod_{q=1}^m (\x_i + y_q) }
-
\frac{\x_i}
{(\x_i+y_j)\prod_{q=1}^m (x_i + y_q) }
\right)_{\substack{1 \leq i\leq n \\ 1\leq j\leq m}},\]
\[
X_{\lambda} = \left(
\frac{x_i^{\lambda_j+n-m-j+1}}
{\prod_{q=1}^m (\x_i + y_q)}-
\frac{\x_i^{\lambda_j+n-m-j+1}}
{\prod_{q=1}^m (x_i+y_q)}
\right)_{\substack{1 \leq i\leq n \\ 1\leq j\leq k-1}},  
\qquad 
Y_{\lambda} = \left(
y_j^{\lambda'_i+m-n-i}
\right)_{\substack{1 \leq i\leq m-n+k-1 \\ 1\leq j\leq m}}\]
and 
\begin{equation*}
D= \frac
{\prod_{i=1}^{n}
    (x_i-\x_i)
    \prod_{1 \leq i<j\leq n} 
(x_i+\x_i-x_j-\x_j)
\prod_{1 \leq i<j\leq m} (y_i-y_j)
}{\prod_{i=1}^n \prod_{j=1}^m (x_i+y_j) (\x_i+y_j)}.    
\end{equation*}
\end{thm}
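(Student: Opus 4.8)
The plan is to run the same strategy that underlies \cref{det-hook}, but starting from the Jacobi--Trudi and dual Jacobi--Trudi identities for $\spo_\lambda(X;Y)$ recalled earlier in the paper. I would first record the complete and elementary orthosymplectic symmetric functions through their generating series; the key point is that the generating series of the complete functions is a rational function whose numerator is $\prod_{j=1}^m(1+y_j t)$ and whose denominator is $\prod_{i=1}^n(1-x_it)(1-\x_it)$ (possibly up to an explicit extra factor coming from the odd part of $\mathfrak{spo}$), so that the Jacobi--Trudi determinant for $\spo_\lambda$ has exactly the shape of the one for a hook Schur polynomial $\hs_\lambda$ in the $2n+m$ alphabet $\{x_1,\x_1,\dots,x_n,\x_n\}\cup\{y_1,\dots,y_m\}$. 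The first reduction is thus to rephrase \cref{thm:main} as an identity between the $\ell\times\ell$ Jacobi--Trudi determinant (for any $\ell\ge\max\{\ell(\lambda),\ell(\lambda')\}$) and the claimed $(m+k-1)\times(m+k-1)$ block determinant on the right of \eqref{det-ortho}.

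Next I would carry out the determinantal surgery. Split $\lambda$ at the index $k=\min\{j:\lambda_j+n+1-j\le m\}$ into the ``large'' rows $j<k$ and the ``small'' rows $j\ge k$, and split $\lambda'$ correspondingly. On the small rows, expand each entry of the Jacobi--Trudi matrix through the geometric part of the generating series: after clearing the denominators $\prod_q(x_i+y_q)$ and $\prod_q(\x_i+y_q)$ this produces the Cauchy-type block $R$ together with the Weyl denominator
\[
D=\frac{\prod_{i=1}^n(x_i-\x_i)\prod_{1\le i<j\le n}(x_i+\x_i-x_j-\x_j)\prod_{1\le i<j\le m}(y_i-y_j)}{\prod_{i,j}(x_i+y_j)(\x_i+y_j)},
\]
which is precisely the bialternant denominator obtained when one folds the $2n$ rows indexed by $x_i,\x_i$ into the $n$ ``symplectic'' rows of $R$, each such row being the suitably normalised difference of the $x_i$-row and the $\x_i$-row (the stray factor $x_i$, resp.\ $\x_i$, in the numerators of $R$ and $X_\lambda$ being the $+1$ shift familiar from the symplectic Weyl character formula). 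On the large rows, a degree/vanishing argument of the kind used by Moens and Jeugt replaces the corresponding $k-1$ columns by the monomial block $X_{\lambda}$ and the corresponding $m-n+k-1$ rows by the transposed monomial block $Y_{\lambda}$ expressed through $\lambda'$; it is the dual Jacobi--Trudi identity that makes it possible to trade the description of the large rows of $\lambda$ for the block indexed by $\lambda'$. Reassembling the four blocks into the matrix of \eqref{det-ortho} requires permuting rows and columns, and the accumulated sign of these permutations together with the sign hidden in $D$ is exactly $(-1)^{mn-n+k-1}$.

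Along the way I would need two auxiliary facts: that $m-n+k-1\ge 0$, so that $Y_{\lambda}$ is well defined (this follows from $\ell(\lambda)\le n$ and the definition of $k$ by a short inequality chase), and that the expression does not depend on the auxiliary size $\ell$ of the Jacobi--Trudi determinant, which one checks by noting that enlarging $\ell$ only adds rows and columns removable by elementary operations. The degenerate cases $k=1$ (no $X_{\lambda}$ columns, leaving $R$ and $Y_{\lambda}$) and $k>\ell(\lambda)$ should be handled separately but are easier.

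The main obstacle I anticipate is the bookkeeping: controlling the exact sign $(-1)^{mn-n+k-1}$ through the row/column permutations and through the folding of the $x_i,\x_i$ rows, and making sure the flag bounds $1\le j\le k-1$ and $1\le i\le m-n+k-1$ emerge naturally from the large-row analysis rather than from an ad hoc truncation. A secondary difficulty is that, unlike the $\mathfrak{gl}$ case, the orthosymplectic generating series may carry an extra factor from the odd part of $\mathfrak{spo}$, which must be absorbed consistently into both $R$ and the monomial blocks without disturbing the denominator $D$.
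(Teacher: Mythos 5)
Your overall plan points in the right direction (start from the Jacobi--Trudi type identity, split $\lambda$ at the index $k$, pad/trade the large rows for data indexed by $\lambda'$, track the sign), and this is indeed the skeleton of the paper's argument. But two of your load-bearing steps are not correct as stated. First, the orthosymplectic Jacobi--Trudi determinant \eqref{JT-ortho} does \emph{not} have ``exactly the shape'' of the hook Schur Jacobi--Trudi determinant in the alphabet $(X,\X;Y)$: its columns for $j\ge 2$ are the folded sums $J_{\lambda_i-i+j}+J_{\lambda_i-i-j+2}$, and indeed $\spo_\lambda(X;Y)\neq \hs_\lambda(X,\X;Y)$. Converting this folded determinant into a bialternant in $x_i^p-\x_i^p$ over the symplectic Weyl denominator is the technical heart of the proof; the paper does it with a specific identity (\cref{lem:evaluation}, the Fulton--Harris lemma expressing $\zeta_j(l)$ as a product of a row of $\overline{H}$'s, a unitriangular $e$-matrix, and the column $(\zeta_j(n+1-u))$), applied row by row with the case distinction $\lambda_i+n-i+1-p\ge 0$ that is exactly what makes the threshold $k$ and the flag bound $1\le j\le k-1$ appear. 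In your write-up this step is only gestured at (``folds the $2n$ rows into $n$ symplectic rows''), so the mechanism that produces both $D$ and the split into the $X_\lambda$ columns is missing. Second, you invoke a ``dual Jacobi--Trudi identity'' for $\spo_\lambda$ to obtain the block indexed by $\lambda'$; no such identity is available in (or needed by) the paper. Instead the paper uses \cref{lem:separate} (Macdonald's complementation lemma) to see that $\Lambda=\{\lambda_j+n-j+1: k\le j\le n\}$ and $\Lambda'=\{n+i-\lambda_i': 1\le i\le m-n+k-1\}$ partition $[m]$, pads the matrix with a unitriangular block $(h_{-i+j}(Y))_{i,j\in\Lambda'}$ of determinant $1$, and then performs explicit column operations (using $\sum_p(-1)^pe_p h_{r-p}=0$) to turn the small-row columns into pure $x_i^j-\x_i^j$ columns while creating the $h(Y)$-rows; replacing this by an appeal to a dual identity is a genuine gap, not bookkeeping.

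A further gap is that your outline stops at what is essentially \cref{thm:equiv-ortho}: the manipulations you describe naturally yield a determinant whose lower-left block consists of complete homogeneous functions $h_{\lambda_i'-n-i+j}(Y)$ and whose left block is $(-1)^{m-j}(x_i^j-\x_i^j)$, with denominator the purely symplectic $D$. Passing from that to the stated form \eqref{det-ortho} --- with the Cauchy-type kernel $R$, the monomial block $Y_\lambda=(y_j^{\lambda_i'+m-n-i})$, and the Vandermonde $\prod_{i<j}(y_i-y_j)$ together with $\prod_{i,j}(x_i+y_j)(\x_i+y_j)$ absorbed into $D$ --- is a separate, nontrivial chain of row and column operations (clearing the $(x_i+y_q)(\x_i+y_q)$ denominators, then repeatedly using $y_j^p-y_s^p=(y_j-y_s)h_{p-1}(y_j,y_s)$ and $h_p(y_{j-1},\dots,y_m)=h_p(y_j,\dots,y_m)+y_{j-1}h_{p-1}(y_{j-1},\dots,y_m)$), which occupies the second half of the paper's proof. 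Your sketch attributes the appearance of $R$ and of the $y$-Vandermonde to the geometric-series summation on the ``small'' rows, but that summation (the identity $\sum_{p=0}^m x^{-p}e_p(Y)=\prod_q(1+\x y_q)$) only applies to the \emph{large} rows $j<k$, where the $p$-sum runs all the way to $m$; so as written this part of the argument would not go through and needs to be replaced by the explicit equivalence step. Finally, the worry about an ``extra factor from the odd part of $\mathfrak{spo}$'' is unfounded: the generating function of the $J_r$ in \eqref{JT-ortho} is exactly $\prod_j(1+y_jt)\big/\prod_i(1-x_it)(1-\x_it)$, with no correction factor.
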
 

\begin{eg} Let $n=1$, $m=2$ and consider the partition
    $\lambda=(2)$. 
    Then $k=2$ and by \eqref{det-ortho}, we have 
    \begin{multline*}
\spo_{\lambda}(x_1;y_1,y_2) = \frac{(x_1+y_1)(x_1+y_2)(\x_1+y_1)(\x_1+y_2)}{(x_1-\x_1)(y_1-y_2)} \\
\times
   \left( \frac{x_1}{(\x_1+y_1)(\x_1+y_2)}-
       \frac{\x_1}{(x_1+y_1)(x_1+y_2)} \right)
    \times \det 
    \left(
    \begin{array}{cc}
       y_1 & y_2\\
      1 & 1 
    \end{array}
    \right).        
    \end{multline*}
    It turns out to be equal to
    \begin{equation}
    \label{symplec}
   x_1^2+\x_1^2+1+y_1(x_1+\x_1)+y_2(x_1+\x_1)+y_1y_2.     
    \end{equation}
    {As the expression for $\spo_{(2)}(x_1;y_1,y_2)$ in \eqref{symplec} is the same as the one given in \eqref{symplec-1}, \cref{thm:main} is verified in this instance.} 
\end{eg}

\begin{cor}
\label{cor:m=1} 
  Let $\lambda$ be a partition of length at most $n$. Then
 \[
\spo_{\lambda}(x_1,x_2,\dots,x_n;y) =
\frac{1}{D}
\det \left(
  x_i^{\lambda_j+n-j+1}-\x_i^{\lambda_j+n-j+1}+
  y \left(x_i^{\lambda_j+n-j} - \x_i^{\lambda_j+n-j}\right) 
\right),
\]   
where $D= 
\prod_{i=1}^{n}
    (x_i-\x_i)
    \prod_{1 \leq i<j\leq n} 
(x_i+\x_i-x_j-\x_j)$. 
\end{cor}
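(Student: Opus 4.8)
The plan is to derive \cref{cor:m=1} by specializing \cref{thm:main} to $m=1$ and simplifying; with one $y$-variable the block determinant on the right of \eqref{det-ortho} should collapse to the stated $n\times n$ determinant. Write $Y=(y)$. First I would pin down $k$: since $\ell(\lambda)\le n$, the inequality $\lambda_j+n+1-j\le 1$ fails for every $j\le n-1$ and holds at $j=n+1$, so $k=n$ if $\lambda_n=0$ and $k=n+1$ if $\lambda_n\ge 1$, and correspondingly $Y_\lambda$ has $m-n+k-1\in\{0,1\}$ rows. The algebraic core is the elementary identity
\[
\frac{x_i^{\,a}}{\x_i+y}-\frac{\x_i^{\,a}}{x_i+y}
=\frac{\bigl(x_i^{\,a+1}-\x_i^{\,a+1}\bigr)+y\bigl(x_i^{\,a}-\x_i^{\,a}\bigr)}{(x_i+y)(\x_i+y)},
\]
valid for every $a\ge 0$: taking $a=\lambda_j+n-j$ shows the $(i,j)$ entry of $X_\lambda$ is $\bigl((x_i+y)(\x_i+y)\bigr)^{-1}$ times the $(i,j)$ entry of the matrix $M$ on the right of \cref{cor:m=1}, and taking $a=0$ (so that $x_i^0-\x_i^0=0$) shows the single column $R$ equals $\bigl((x_i+y)(\x_i+y)\bigr)^{-1}$ times $\bigl(x_i-\x_i\bigr)_{1\le i\le n}$, which is exactly the $j=n$ column of $M$ in the case $\lambda_n=0$.

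Pulling $\prod_{i=1}^{n}\bigl((x_i+y)(\x_i+y)\bigr)^{-1}$ out of the first $n$ rows of the block matrix, I next evaluate the remaining determinant of an integer matrix assembled from the columns of $M$. If $\lambda_n\ge 1$, then $k=n+1$, the block $Y_\lambda$ is the $1\times1$ matrix $(1)$, and Laplace expansion of the $(n+1)\times(n+1)$ determinant along the last row produces $(-1)^{(n+1)+1}\det X_\lambda=(-1)^n\det X_\lambda$; here the $R$-column is deleted and $X_\lambda$ is precisely the rescaled $M$ with columns $j=1,\dots,n$ in their natural order. If $\lambda_n=0$, then $k=n$, the bottom strip $(Y_\lambda\mid 0)$ is empty, and $(R\mid X_\lambda)$ is the rescaled $M$ with columns in the order $n,1,2,\dots,n-1$, a cyclic shift contributing the sign $(-1)^{n-1}$. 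In both cases the sign obtained cancels the prefactor $(-1)^{mn-n+k-1}$, which is $(-1)^n$ when $k=n+1$ and $(-1)^{n-1}$ when $k=n$.

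Finally I would simplify $D$. With $m=1$ the factor $\prod_{1\le i<j\le m}(y_i-y_j)$ is an empty product equal to $1$, so $D=\frac{D'}{\prod_{i=1}^{n}(x_i+y)(\x_i+y)}$, where $D'=\prod_{i=1}^{n}(x_i-\x_i)\prod_{1\le i<j\le n}(x_i+\x_i-x_j-\x_j)$ is the denominator appearing in \cref{cor:m=1}; hence the factor $\prod_{i=1}^{n}(x_i+y)(\x_i+y)$ from $1/D$ cancels the one pulled out of the rows, and what survives is exactly $\det M/D'$. I expect the only genuine care-point to be the sign bookkeeping, together with the degenerate case $\lambda_n=0$, where $Y_\lambda$ is an empty block and the column $R$ survives as a true column of $M$ instead of being eliminated by the row expansion; the rest is routine algebra.
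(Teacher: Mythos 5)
Your specialization of \cref{thm:main} at $m=1$ is correct: the case split $k=n$ (when $\lambda_n=0$) versus $k=n+1$ (when $\lambda_n\ge 1$), the identity turning the entries of $R$ and $X_\lambda$ into the entries of the stated $n\times n$ matrix up to the row factors $\bigl((x_i+y)(\x_i+y)\bigr)^{-1}$, and the sign cancellation against $(-1)^{mn-n+k-1}$ all check out, and this is exactly the route the paper intends, since \cref{cor:m=1} is stated as a direct consequence of \cref{thm:main} with no separate written proof. As a minor remark, specializing \cref{thm:equiv-ortho} instead yields the same conclusion with the identical case and sign analysis but without the row-rescaling step, since there the entries and the denominator $D$ are already in the polynomial form of the corollary.
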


We prove \cref{thm:main} in \cref{sec:ortho} after giving all required definitions in \cref{sec:prelim}. Our proof uses the Jacobi--Trudi type formula for orthosymplectic Schur polynomials.
The Jacobi--Trudi type formula 
was given by Balantekin and Bars~\cite{balantekin1981dimension}. 
It was proved using a different approach by Benkart,  Shader  and  Ram~\cite{benkart1998tensor} and   
recently, Stokke and Visentin~\cite{stokke2016lattice} {proved} the formula using lattice paths. Orthosymplectic Cauchy identities and  {a} Pieri rule for orthosymplectic Schur polynomials were proved in~\cite{orthosymplecticcauchy} and
~\cite{stokke2018orthosymplectic} respectively.

We also show that 
the odd symplectic characters
introduced by Proctor~\cite{Proctor1988OddSG} are the same as the orthosymplectic Schur polynomials with some specialized indeterminates (see \cref{cor: odd}). 
Lastly, we give a generalization of an odd symplectic character identity due to Brent, Krattenthaler and Warnaar, which was found in the study of discrete Mehta-type integrals (see ~\cite{brent2016discrete, talkslides, Okada2019ABF}) in \cref{sec:bkw}.

\section{Preliminaries}
\label{sec:prelim}
 Recall that a {\em partition} $\lambda=(\lambda_1,\lambda_2,\dots)$
 is a sequence of weakly decreasing nonnegative
integers 
containing only finitely many non-zero terms.  
The {\em length} of a partition $\lambda$, which
is the number of positive parts, is denoted by $\ell(\lambda)$. A partition $\lambda$ can be represented pictorially as a {\em Young diagram}, whose {$i^{\text{th}}$} row contains {$\lambda_i$}
 left-justified boxes. We will use the so-called English notation where the first row is on top. 
 The {\em conjugate} of a  partition $\lambda$, denoted $\lambda'$, is the partition whose Young diagram is obtained by transposing the Young diagram of $\lambda$.
 For example, \cref{fig:my_label} shows the Young diagrams of $\lambda=(3,2,2)$ and its conjugate $\lambda'=(3,3,1)$.
\begin{figure}[H]
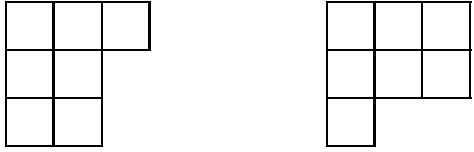

    \centering
    \begin{subfigure}[b]{0.25\textwidth}
         \centering
    {\ydiagram{3,2,2}} 
    \end{subfigure}
    \begin{subfigure}[b]{0.25\textwidth}
         \centering
{\ydiagram{3,3,1}}
\end{subfigure}
    \caption{{{The Young diagram of $\lambda=(3,2,2)$  and $\lambda'=(3,3,1)$}}}
    \label{fig:my_label}
\end{figure}

Throughout, we fix $n$ and $m$ to be positive integers. Let $X=(x_1,x_2,\dots,x_n)$ and $Y=(y_1,\dots,y_m)$  be tuples of commuting indeterminates. Define $\x=1/x$ for an indeterminate $x$ and write $\overline{X}=(\x_1,\dots,\x_n)$. We use the shorthand notation $[a]$ to denote the set $\{1,2,\dots,a\}$ for $a \in \mathbb{Z}_{+}$.

We consider the ring $\mathbb{Z}[x_1,\dots,x_n]$ of polynomials in $n$ commuting variables $x_1,\dots,x_n$ with integer coefficients. A polynomial in this ring is {\em symmetric} if it is invariant under the action of permuting the variables. 
The {\em elementary symmetric function} $e_{\lambda}(X)$ and the {\em complete symmetric function} $h_{\lambda}(X)$ indexed by a partition $\lambda=(\lambda_1,\lambda_2,\dots,\lambda_n)$ are defined as:
\begin{equation}
\label{def:ele}
e_{\lambda}(X) = \prod_{i=1}^n e_{\lambda_i}(X), 
\end{equation}
where
\begin{equation*}
e_r(X) =
\begin{cases}
   \ds \sum_{1 \leq i_1 < i_2 < \dots < i_r \leq n} x_{i_1} x_{i_1} \dots x_{i_r} &  r \geq 1,\\
   \qquad  \qquad  \qquad  \qquad 1 & r=0,
\end{cases}
\end{equation*}
and 
\begin{equation}
\label{def:cmp}
h_{\lambda}(X) = \prod_{i=1}^n h_{\lambda_i}(X),
\end{equation}
where
\begin{equation*}
h_r(X) =
\begin{cases}
   \ds \sum_{1 \leq i_1 \leq i_2 \leq \dots \leq i_r \leq n} x_{i_1} x_{i_1} \dots x_{i_r} &  r \geq 1,\\
   \qquad  \qquad  \qquad  \qquad 1 & r=0.
\end{cases}  
\end{equation*}
Also, define $e_r(X)=h_r(X)=0$ for $r<0$.

For a partition $\lambda=(\lambda_1,\ldots,\lambda_n)$ of length at most $n$, the \emph{Schur polynomial} is given by the following determinantal formula:
\begin{equation}
\label{gldef}
\s_\lambda(X)=\frac{\displaystyle \det
\Bigl(x_i^{\lambda_j+n-j}\Bigr)_{1\le i,j\le n}}
{\displaystyle\det \Bigl(x_i^{n - j}\Bigr)_{1\le i,j\le n}}.
\end{equation}
The expression in \eqref{gldef} is essentially the Weyl character formula.

For partitions $\lambda$ and $\mu$ such that $\mu \subset \lambda \, (\mu_i \leq \lambda_i$, for all $i \geq 1)$, the {\emph{skew
shape} $\lambda/\mu$} is the set theoretic difference where
{$\lambda$ and $\mu$ are regarded as Young diagrams, i.e., collections of boxes}. 
Recall that 
a {{\em semistandard Young tableau}} of shape $\lambda/\mu$ is a filling of $\lambda/\mu$ such that
\begin{itemize}
    \item the entries increase weakly {across} rows,
    \item the entries increase strictly {down} columns.
\end{itemize} 
The {\em skew Schur function} $\s_{\lambda/\mu}(X)$ is given by 
\begin{equation}
\label{def:tab-skew-schur}
\s_{\lambda/\mu}(X)=\sum_{T} \wt(T), \qquad \wt(T) \coloneqq \prod_{i = 1}^n x_i^{c_i(T)},    
\end{equation}
where the sum is taken over all semistandard Young tableaux of shape $\lambda/\mu$ with entries from the set $[n]$ and $c_i(T)$, $i \in [n]$ is the number of occurrences of $i$ in $T$. {If $\mu=\emptyset$, then $\s_{\lambda/\mu}(X)=\s_{\lambda}(X)$.}
The following Jacobi--Trudi formula expresses $\s_{\lambda/\mu}(X)$ in terms of complete symmetric functions:
\begin{equation}
\label{def:S-eJccob}
    \s_{\lambda/\mu}(X)=\det \left(h_{\lambda_i-\mu_j-i+j}(X)\right)_{1 \leq i,j \leq r},
\end{equation}   
where $r$ is any integer such that $r \geq \ell(\lambda)$.

We now give a supersymmetric analogue of the symmetric functions.
We consider the ring $\mathbb{Z}[x_1,\dots,x_n,y_1,\dots,y_m]$ of polynomials in $(n+m)$ commuting variables $x_1,\dots,x_n,$ $y_1,\dots,y_m$ with integer coefficients.
A polynomial $f(X,Y)$ in this ring is {\em doubly symmetric} if it is symmetric in both $(x_1,\dots,x_n)$ and $(y_1,\dots,y_m)$. Moreover, if substituting $x_n=t$ and $y_m=-t$ {results in an element
in $\mathbb{Z}[x_1,\dots,x_{n-1},y_1,\dots,y_{m-1}]$},
then we call $f(X,Y)$ a {\em supersymmetric function}.
The {\em elementary supersymmetric function} and the {\em complete supersymmetric function} indexed by $\lambda=(\lambda_1,\lambda_2,\dots,\lambda_n)$ are:
\begin{equation}
\label{def:superele}
 E_{\lambda}(X;Y) = \prod_{i=1}^n E_{\lambda_i}(X;Y), 
 \end{equation}
 where
 \begin{equation*}
 E_r(X;Y) = 
 \begin{cases}
     \ds \sum_{j=0}^{r} e_{j}(X) h_{r-j}(Y) &
     r \geq 1,\\
   \qquad \qquad  1 & r=0,
 \end{cases}
\end{equation*}
and 
\begin{equation}
\label{def:sS}
 H_{\lambda}(X;Y) = 
 \prod_{i=1}^n H_{\lambda_i}(X;Y),
 \end{equation}
 where 
 \begin{equation*}
 H_r(X;Y) = 
 \begin{cases}
     \ds \sum_{j=0}^{r} h_{j}(X) e_{r-j}(Y) &
     r \geq 1,\\
   \qquad \qquad  1 & r=0.
 \end{cases}
\end{equation*}
It is convenient to define $E_r(X;Y)$ and $H_r(X;Y)$ 
to be
zero for $r<0$. 

\begin{defn}
\label{def:super}
A {\em supertableau} $T$ of shape $\lambda$ with entries 
\[
1 < 2 < \cdots < n < 1' < 2' < \cdots < m'
\]
is a filling of the shape with these entries satisfying the following conditions:
\begin{itemize}
    \item the entries increase weakly {across rows and down columns},
    \item the unprimed entries strictly increase {down} columns,
    \item the primed entries strictly increase {across} rows.
\end{itemize}
\end{defn}
The {\em weight} of such a supertableau is given by
\[
\wt(T) \coloneqq \prod_{i=1}^n x_i^{n_i(T)} 
{\prod_{j=1}^{m} y_j^{n_{j'}(T)}},
\]
where $n_{\alpha}(T)$, for $\alpha$ being any of the entries given in \cref{def:super}, is the number of occurrences of $\alpha$ in $T$. 
Denote the set of supertableaux of skew shape $\lambda$ with entries given in \cref{def:super} 
by $\SSYT_{n/m}(\lambda)$.
For an integer partition $\lambda$, the {\em hook Schur 
(or supersymmetric Schur) 
polynomial}, denoted $\hs_{\lambda}(X;Y)$ is given by
\begin{equation}
\label{def:tab-super-schur}
 \hs_{\lambda/\mu}(X;Y) \coloneqq \sum_{T \, \in \, \SSYT_{n/m}(\lambda)} \wt(T).
 \end{equation}
 \begin{eg}
 Let $n=2$ and $m=1$ and consider the partition $(2,1)$. Then we have
the following supertableaux in $\SSYT_{2/1}((2,1))$:
\[
\begin{ytableau}
\none & 1 & 1\\
\none & 2 & \none
\end{ytableau} 
\begin{ytableau}
\none & 1 & 2\\
\none & 2 & \none
\end{ytableau} 
\begin{ytableau}
\none & 1 & 1\\
\none & 1' & \none
\end{ytableau} 
\begin{ytableau}
\none & 1 & 2\\
\none & 1' & \none
\end{ytableau} 
\]
\[
\begin{ytableau}
\none & 1 & 1'\\
\none & 2 & \none 
\end{ytableau} 
\begin{ytableau}
\none & 2 & 2\\
\none & 1' & \none
\end{ytableau} 
\begin{ytableau}
\none & 1 & 1'\\
\none & 1' & \none
\end{ytableau} 
\begin{ytableau}
\none & 2 & 1'\\
\none & 1' & \none
\end{ytableau} 
\]
Therefore,
\[
 \hs_{(2,1)}(x_1,x_2;y_1)=x_1^2x_2 + x_1 x_2^2 + x_1^2 y_1 + 2x_1x_2y_1+ x_2^2y_1+x_1y_1^2+x_2y_1^2.
\]
 \end{eg}
\noindent 
The Jacobi--Trudi formula expresses $\hs_{\lambda}(X;Y)$ in terms of
the complete supersymmetric functions~\cite{pragacz1992jacobi}. 
\begin{equation}
    \label{sS-jbi}
    \hs_{\lambda}(X;Y)=\det\left(H_{\lambda_i-i+j}(X;Y)\right)_{1 \leq i,j \leq n}.
\end{equation}
Note that the hook Schur polynomial $\hs_{\lambda}(X;Y)$ is nonzero if and only if $\lambda_{n+1} \leq m$.  
Now we consider orthosymplectic Schur functions after defining symplectic Schur functions.

\begin{defn}
\label{defn:st}
{A {\em symplectic tableau} $T$ of shape $\lambda$ is a filling of the shape with entries from 
$
1 < \overline{1} < 2 < \overline{2} < \cdots < n < \overline{n}
$
satisfying the following conditions:}
\begin{itemize}
    \item the entries increase weakly {across rows and strictly down columns},
    \item  the entries in the $i^{\text{th}}$ row must be greater than or equal to $i$.
\end{itemize}
\end{defn}
\noindent 
The weight of such a tableau is given by
\[
\wt(T)= {\prod_{i=1}^n x_i^{n_{i}(T)-n_{\overline{i}}(T)},}
\]
where $n_i(T)$ and $n_{\overline{i}}(T)$ {are} the number of occurrences of $i$ and $\overline{i}$ in $T$ respectively. Denote the set of symplectic tableaux of shape $\lambda$ filled with entries given in \cref{defn:st} by {$\SPT_n(\lambda)$}. 
\begin{eg}
The following is a symplectic tableau $T$ of shape $\lambda=(3,2,2)$ with $\wt(T)={x_2^{-1}}$.
\[
\begin{ytableau}
  1 & \overline{1} & 2 \\
    \overline{2} & \overline{2} \\
    4 & \overline{4}
\end{ytableau}
\]
\end{eg}
\noindent
The {\em symplectic Schur function} is defined as
\[
\sp_{\lambda}(X) \coloneqq \sum_{T \in \SPT_n(\lambda)} \wt(T),
\]
where the sum runs over the set of tableaux in $\SPT_n(\lambda)$. The symplectic Schur function can be expressed using the Weyl character formula in the following way. Let $\lambda$ be a partition of length at most $n$. Then
\begin{equation}
\label{spdef}
\sp_\lambda(X)=
\frac{\ds \det \Bigr(x_i^{\lambda_j+n-j+1}-\x_i^{\lambda_j+n-j+1}\Bigr)_{1\le i,j\le n}}
{\ds \det
\Bigr(x_i^{n-j+1}-\x_i^{n-j+1}\Bigr)_{1\le i,j\le n}},
\end{equation}
and the denominator here is
\begin{equation}
\label{spdenom}
\det \Bigr(x_i^{n-j+1}-\x_i^{n-j+1}\Bigr)_{1\le i,j\le n}
= \prod_{i=1}^n(x_i-\x_i)\,\prod_{1\le i<j\le n}(x_i+\x_i-x_j-\x_j).
\end{equation}

\begin{defn}
\label{def:ot}
Let  $B_0=\{1,\overline{1},2,\overline{2},\dots,n,\overline{n}\}$, $B_1=\{1',\dots,m'\}$, $B=B_0 \cup B_1$ and 
 order $B$ as follows:
\[
1 < \overline{1} < 2 < \overline{2} < \cdots < n < \overline{n} < 1' < 2' < \cdots < m'.
\]
An {\em orthosymplectic tableau} $T$ of shape $\lambda$ is a filling of {the} Young diagram of  $\lambda$ with entries from $B$ satisfying the following conditions:
\begin{itemize}
    \item 
     the portion $S$ of $T$ that contains entries only from $B_0$ is symplectic,
    \item  the skew tableau formed by removing S from T is strictly increasing {across} the rows from left to right and weakly increasing {down} columns from top to bottom.
\end{itemize}
\end{defn}
\noindent
The weight of such a tableau is given by \[
\wt(T)= {\prod_{i=1}^n x_i^{n_{i}(T)-n_{\overline{i}}(T)} \prod_{j=1}^m y_j^{n_{i'}(T)}},
\]
where $n_{\alpha}$ is the number of occurrences of $\alpha$, for $\alpha$ being any of the entries in {$B$}.
Denote the set of orthosymplectic tableaux of shape $\lambda$ filled with entries {in $B$} 
by {$\OSPT_{n/m}({\lambda})$}.
\begin{eg}
The following is an orthosymplectic tableau $T$ of shape $\lambda=(3,3,2,1)$ and {$\wt(T)=x_1^{-1} x_4 y_1 y_2 y_3^3$}.
\[
\begin{ytableau}
    \overline{1} & 2 & 3'\\
    \overline{2} & 1' & 3'\\
    4 & 3'\\
    2'
\end{ytableau}
\]
\end{eg}
\noindent
The {\em orthosymplectic Schur function} indexed by a partition $\lambda$ is given by  
\[
\spo_{\lambda}(X;Y) \coloneqq \sum_{T \, \in \, \OSPT_{n/m}({\lambda})} \wt(T).
\]
Note that  
$\spo_{\lambda}(X;\emptyset)=\sp_{\lambda}(X)$. Moreover, 
\[
\spo_{\lambda}(X;Y) = \sum_{\mu} \sp_{\mu}(X) \s_{\lambda'/\mu'}(Y).
\]
\begin{eg}
\label{eg}
Let $n=1$, $m=2$ and consider the partition
    $\lambda=(2)$. {Then we have
the following orthosymplectic tableaux:}
\[
\begin{ytableau}
 1 & 1
\end{ytableau} 
\qquad 
\begin{ytableau}
 \overline{1} & \overline{1}
\end{ytableau} 
\qquad 
\begin{ytableau}
1 & \overline{1}
\end{ytableau} 
\qquad 
\begin{ytableau}
 1 & 1'
\end{ytableau} 
\]
\[
\begin{ytableau}
 \overline{1} & 1'
\end{ytableau} 
\qquad 
\begin{ytableau}
 1 & 2'
\end{ytableau} 
\qquad 
\begin{ytableau}
  \overline{1} & 2'
\end{ytableau} 
\qquad 
\begin{ytableau}
1' & 2'
\end{ytableau} 
\]
Therefore,
\begin{equation}
    \label{symplec-1}
     \spo_{(2)}(x_1;y_1,y_2)=   x_1^2+\x_1^2+1+y_1(x_1+\x_1)+y_2(x_1+\x_1)+y_1y_2.
\end{equation}
\end{eg}
\noindent 
The Jacobi--Trudi type identity for the orthosymplectic Schur functions is given by
\begin{equation}
\label{JT-ortho}
     \spo_{\lambda}(X;Y)=
    { \det \left(
    \begin{array}{c|c} 
    \left(J_{\lambda_i-i+1}\right)_{1 \leq i \leq n} & 
    (J_{\lambda_i-i+j}+J_{\lambda_i-i-j+2})_{\substack{1 \leq i \leq n\\
    2 \leq j \leq n}}  
    \end{array} \right)},
\end{equation}
    where $J_r=\sum_{l=0}^r h_l(X,\overline{X}) e_{r-l}(Y)$. {Here $h_l(X,\overline{X})$ is the Laurent polynomial obtained by substituting $x_{n+i}=\x_i, 1 \leq i \leq n$ in $h_{l}({x_1,x_2,\dots,x_{2n}})$.}

Note that the orthosymplectic Schur polynomial $\spo_{\lambda}(X;Y)$ is nonzero if and only if $\lambda_{n+1} \leq m$. The following remark gives an expression for $\spo_{\lambda}(X;y)$ if 
$\lambda_{n+1} \leq 1$.
\begin{rem}
\label{rem:len}
{For $m=1$, suppose} $\lambda$ is a partition such that $\ell(\lambda)=\ell$ and $\lambda_{n+1} \leq 1$.
If $\ell \leq n$, then $\spo_{\lambda}(X;y)$ is given by \cref{cor:m=1}.
If $\ell>n$, then $\lambda_{n+1}=\cdots=\lambda_{\ell}=1$. So, by \cref{def:ot}, $T$ contains $1'$ in all the boxes of $\lambda$ from the $(n+1)^{\text{th}}$ row to $\ell^{\text{th}}$ row, for all $T \in \OSPT_{n/1}(\lambda)$. Therefore, {using \eqref{spdef} and \eqref{spdenom}} we have
 \[
\spo_{\lambda}(X;y) =
\frac{y^{\ell-n}}{D}
\det \left(
  x_i^{\lambda_j+n-j+1}-\x_i^{\lambda_j+n-j+1}+
  y (x_i^{\lambda_j+n-j} - \x_i^{\lambda_j+n-j}) 
\right),
\]   
where $D= 
\prod_{i=1}^{n}
    (x_i-\x_i)
    \prod_{1 \leq i<j\leq n} 
(x_i+\x_i-x_j-\x_j)$. 
\end{rem}

Now we give a relation between odd symplectic characters defined by Proctor~\cite{Proctor1988OddSG} and orthosymplectic Schur functions after defining odd symplectic characters. 

\begin{defn}
 Let $\lambda$ be a partition of length at most $n$. An \emph{$(n-1,1)$-symplectic
tableau} of shape $\lambda$ is a filling of the Young diagram of $\lambda$ with entries from
\[
1 < \overline{1} < 2 < \overline{2} < \cdots < n-1 < \overline{n-1}  < n 
\]
satisfying the following three conditions:
\begin{itemize}
\item the entries increase weakly {across rows and strictly down columns},
    \item the entries of the $i^{\text{th}}$ row are greater than or equal to $i$.
\end{itemize}
We denote by ${\SPT^{(n-1,1)}(\lambda)}$ the set of $(n-1,1)$-symplectic tableaux of shape $\lambda$. Given {an} $(n-1,1)$-symplectic tableau $T$, we define
\[
\wt(T)=x_n^{\alpha_n(T)} \prod_{i=1}^{n-1} x_i^{\alpha_i(T)-\alpha_{\overline{i}}(T)},
\]   
where $\alpha_i(T)$, $\alpha_{\overline{i}}(T)$ for $i \in [n-1]$, and $\alpha_n(T)$ are {the} number of occurences of $i$, $\overline{i}$ and $n$ respectively.
\end{defn}
\noindent
The {\em odd symplectic character} indexed by a partition of length at most $n$ is given by 

\[
\sp^{(n-1,1)}_{\lambda}(x_1,\dots,x_{n-1}|x_n) = \sum_{T \in \SPT^{(n-1,1)}(\lambda)} \wt(T).
\]

{\begin{eg}
Suppose $n=2$ and $\lambda=(2,1)$. 
Then we have the folowing $(1,1)$-symplectic tableaux of shape $\lambda$:
\[
\begin{ytableau}
 1 & 1\\
 2
\end{ytableau} 
\qquad 
\begin{ytableau}
 1 & \overline{1}\\
 2
\end{ytableau} 
\qquad 
\begin{ytableau}
1 & 2\\
2
\end{ytableau} 
\qquad 
\begin{ytableau}
 \overline{1} & \overline{1}\\
 2
\end{ytableau}
\qquad
\begin{ytableau}
 \overline{1} & 2\\
 2
\end{ytableau}
\]
Therefore,
\[
\sp^{(1,1)}_{(2,1)}(x_1|x_2)=x_1^2x_2+x_2+x_1x_2^2+\x_1^2x_2+\x_1x_2^2.
\]
\end{eg}}

Okada~\cite{Okada2019ABF} proved the following determinantal formula for odd symplectic characters.

\begin{thm}[{\cite[Theorem 1.1]{Okada2019ABF}}]
\label{thm:odd}
    For a partition of length at most $n$, let $A_{\lambda}=(a_{ij})$ be {an} $n \times n$ matrix such that 
\[
a_{ij}= \begin{cases}
 (x_i^{\lambda_j+n-j+1}-\x_i^{\lambda_j+n-j+1})
 -y^{-1}(x_i^{\lambda_j+n-j}-\x_i^{\lambda_j+n-j})
 & 1 \leq i \leq n-1,\\
 y^{\lambda_j+n-j}  
 & i=n.
\end{cases}
\] Then
    \[
    \sp^{(n-1,1)}_{\lambda}(x_1,\dots,x_{n-1}|x_n) = \frac{\det A_{\lambda}}{\det A_{\emptyset}},
    \]  
    where 
    \[\det A_{\emptyset} = \prod_{i=1}^{n-1}
    (x_i-x_i^{-1})
    \prod_{1 \leq i<j\leq n}(x_i+\x_i-x_j-\x_j).
    \]
\end{thm}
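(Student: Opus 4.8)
The plan is to deduce \cref{thm:odd} from \cref{cor:m=1} by realizing an odd symplectic character as a specialized orthosymplectic Schur function. The identity I would use is
\[
\sp^{(n-1,1)}_{\lambda}(x_1,\dots,x_{n-1}|x_n)=\spo_{\lambda}(x_1,\dots,x_n;-x_n^{-1}),
\]
i.e.\ the odd symplectic character in the variables $x_1,\dots,x_n$ coincides with the orthosymplectic Schur function in the $n$ symplectic pairs $x_1,\dots,x_n$ whose single orthogonal indeterminate is set equal to $-x_n^{-1}=-\x_n$. (This is \cref{cor: odd} below, which I would prove first and independently of \cref{thm:odd}, so as to avoid circularity.) Granting it, the theorem follows from a short determinant manipulation, so the substance of the argument lies in establishing the identity.

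For the determinant step, apply \cref{cor:m=1} to $\spo_{\lambda}(x_1,\dots,x_n;y)$ — legitimate since $\ell(\lambda)\le n$ — and put $y=-x_n^{-1}$. In the resulting $n\times n$ matrix, for $1\le i\le n-1$ the $(i,j)$ entry is $(x_i^{\lambda_j+n-j+1}-\x_i^{\lambda_j+n-j+1})-x_n^{-1}(x_i^{\lambda_j+n-j}-\x_i^{\lambda_j+n-j})$, which is exactly the entry $a_{ij}$ of \cref{thm:odd} (with $y=x_n$). For $i=n$ the negative powers of $x_n$ cancel:
\[
(x_n^{\lambda_j+n-j+1}-\x_n^{\lambda_j+n-j+1})-x_n^{-1}(x_n^{\lambda_j+n-j}-\x_n^{\lambda_j+n-j})=x_n^{\lambda_j+n-j+1}-x_n^{\lambda_j+n-j-1}=(x_n-\x_n)\,x_n^{\lambda_j+n-j},
\]
so the $n$-th row is $(x_n-\x_n)$ times the row $(x_n^{\lambda_j+n-j})_j$ of $A_\lambda$. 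Factoring out this scalar makes the numerator $(x_n-\x_n)\det A_\lambda$, while the denominator of \cref{cor:m=1} factors as $D=\prod_{i=1}^{n}(x_i-\x_i)\prod_{1\le i<j\le n}(x_i+\x_i-x_j-\x_j)=(x_n-\x_n)\det A_\emptyset$; this last equality is the $\lambda=\emptyset$ case of \cref{cor:m=1} (whose left side is the trivial character $1$), which simultaneously verifies the evaluation of $\det A_\emptyset$ quoted in the statement. Cancelling $(x_n-\x_n)$ gives $\spo_{\lambda}(x_1,\dots,x_n;-x_n^{-1})=\det A_\lambda/\det A_\emptyset$, which together with the displayed identity is \cref{thm:odd}.

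The main obstacle is thus the identity $\sp^{(n-1,1)}_{\lambda}=\spo_{\lambda}(x_1,\dots,x_n;-\x_n)$. I would prove it by expanding both sides as $\mathbb{Z}[x_n,x_n^{-1}]$-linear combinations of the polynomials $\sp_\mu(x_1,\dots,x_{n-1})$. On the left, in a $(n-1,1)$-symplectic tableau of shape $\lambda$ the boxes holding the top letter $n$ (which has no barred partner) lie at the bottoms of their columns, hence form a horizontal strip $\lambda/\mu$, and deleting them leaves an element of $\SPT_{n-1}(\mu)$; so $\sp^{(n-1,1)}_{\lambda}(x_1,\dots,x_{n-1}|x_n)=\sum_{\mu}\sp_{\mu}(x_1,\dots,x_{n-1})\,x_n^{|\lambda|-|\mu|}$, the sum over $\mu$ with $\lambda/\mu$ a horizontal strip (convention $\sp_\mu\equiv0$ when $\ell(\mu)\ge n$). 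On the right, $\spo_{\lambda}(X;y)=\sum_{\nu}\sp_{\nu}(X)\,\s_{\lambda'/\nu'}(y)$ collapses — a one-variable skew Schur function being nonzero precisely on vertical strips — to $\sum_{\nu}\sp_{\nu}(x_1,\dots,x_n)\,(-\x_n)^{|\lambda|-|\nu|}$ over $\nu$ with $\lambda/\nu$ a vertical strip, and the $\Sp(2n)\!\downarrow\!\Sp(2n-2)$ branching rule then rewrites each $\sp_\nu(x_1,\dots,x_{n-1},x_n)$ as a sum over double interlacings $\nu\succeq\rho\succeq\mu$. Matching coefficients of $\sp_\mu(x_1,\dots,x_{n-1})$ reduces the claim to a Laurent-polynomial identity in $x_n$ whose terms cancel under a sign-reversing involution on the pairs $(\nu,\rho)$ except for the fixed point $(\nu,\rho)=(\lambda,\mu)$, which survives exactly when $\lambda/\mu$ is a horizontal strip. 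I expect the delicate part to be checking that the involution is defined precisely off this fixed point — so that the cancellation is complete — and that the length bound $\ell(\mu)\le n-1$ stays consistent. (A purely combinatorial substitute is a sign-reversing involution run directly on $\OSPT_{n/1}(\lambda)$, where after $y=-\x_n$ each box labelled $1'$ carries weight $-x_n^{-1}$ and each labelled $\overline n$ carries $x_n^{-1}$, swapping a canonically chosen $\overline n$-box with a $1'$-box; its fixed points should be in weight-preserving bijection with $\SPT^{(n-1,1)}(\lambda)$, the care being to keep the swap compatible with the strict-across-rows rule on the primed part.)
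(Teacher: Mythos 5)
Your plan inverts the paper's logic, and at the crucial point it is only a sketch. In the paper, \cref{thm:odd} is not proved at all: it is Okada's theorem, imported with a citation, and \cref{cor: odd} is then \emph{deduced} by comparing \cref{thm:odd} with \cref{cor:m=1}. The determinant manipulation you carry out (specializing \cref{cor:m=1} at $y=-x_n^{-1}$, noting that the $n$-th row becomes $(x_n-\x_n)\,x_n^{\lambda_j+n-j}$, and cancelling $(x_n-\x_n)$ against $D=(x_n-\x_n)\det A_{\emptyset}$) is correct, but it is exactly the paper's comparison of the two determinantal formulas run in the opposite direction; it reduces \cref{thm:odd} to the identity $\sp^{(n-1,1)}_{\lambda}=\spo_{\lambda}(x_1,\dots,x_n;-\x_n)$, i.e.\ to \cref{cor: odd} (up to the easy invariance of $\spo_{\lambda}$ under $x_n\mapsto\x_n$, which you should state, since \cref{cor: odd} as written has $x_n^{-1}$ in the last symplectic slot).

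The genuine gap is that this identity — which in your scheme must be proved independently of \cref{thm:odd} to avoid circularity, and which you correctly identify as ``the main obstacle'' — is never actually established. Your expansion of the left side over horizontal strips and of the right side over vertical strips is fine, but the heart of the argument, the sign-reversing involution on pairs $(\nu,\rho)$ (or, in your alternative, on $\OSPT_{n/1}(\lambda)$ after the substitution $y=-\x_n$), is only asserted: you do not define the involution, verify that it is weight-preserving and sign-reversing, or show that its fixed points are precisely the pairs $(\lambda,\mu)$ with $\lambda/\mu$ a horizontal strip, and you yourself flag this as the ``delicate part.'' Until that cancellation is carried out (or \cref{cor: odd} is otherwise proved without invoking \cref{thm:odd}), the proposal proves nothing beyond what the paper already gets by citing Okada; as it stands it is an incomplete reduction, with all of the substance deferred to an unverified combinatorial step.
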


\noindent
Comparing the determinantal formulas in \cref{cor:m=1} and \cref{thm:odd}, we get the following result.

\begin{cor}
\label{cor: odd}
Suppose $\lambda$ is a partition of length at most $n$. Then 
    \[
\sp^{(n-1,1)}_{\lambda}(x_1,\dots,x_{n-1}| x_n) =
\spo_{\lambda}(x_1,\dots,x_{n-1},x_n^{-1};-x_n^{-1}).
\]
\end{cor}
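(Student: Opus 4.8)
The plan is to deduce \cref{cor: odd} by directly comparing the two determinantal expressions already available: the formula for $\spo_\lambda(x_1,\dots,x_n;y)$ with a single $Y$-variable $y$ in \cref{cor:m=1}, and Okada's formula for $\sp^{(n-1,1)}_\lambda(x_1,\dots,x_{n-1}|x_n)$ in \cref{thm:odd}. Both hold for any partition $\lambda$ of length at most $n$, which is exactly the hypothesis of the corollary, so no case analysis is needed. The single maneuver is to make, in the formula of \cref{cor:m=1}, the substitution $x_n \mapsto x_n^{-1}$ (so that $\x_n \mapsto x_n$) together with $y \mapsto -x_n^{-1}$, and then to check that the resulting rational expression is precisely $\det A_\lambda/\det A_\emptyset$.

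First I would track the numerator. For $i=1,\dots,n-1$ the variables $x_i$ are untouched, and under $y\mapsto -x_n^{-1}$ the $(i,j)$-entry $x_i^{\lambda_j+n-j+1}-\x_i^{\lambda_j+n-j+1}+y(x_i^{\lambda_j+n-j}-\x_i^{\lambda_j+n-j})$ of \cref{cor:m=1} becomes exactly the entry $a_{ij}$ of the matrix $A_\lambda$ from \cref{thm:odd} (whose $y$ is $x_n$, so $y^{-1}=x_n^{-1}$). For the last row $i=n$ one substitutes $x_n\mapsto x_n^{-1}$, $\x_n\mapsto x_n$ and $y\mapsto -x_n^{-1}$; writing $q=\lambda_j+n-j$, the two terms equal to $x_n^{-q-1}$ cancel and the $(n,j)$-entry collapses to $x_n^{q-1}-x_n^{q+1}=(x_n^{-1}-x_n)x_n^{q}$, i.e. $(x_n^{-1}-x_n)$ times the entry $a_{nj}=y^{\lambda_j+n-j}$ of $A_\lambda$. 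Hence the substituted matrix has the same first $n-1$ rows as $A_\lambda$ and its last row equals $(x_n^{-1}-x_n)$ times the last row of $A_\lambda$, so by multilinearity of the determinant in the last row the substituted numerator equals $(x_n^{-1}-x_n)\det A_\lambda$.

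Next I would track the denominator $D=\prod_{i=1}^{n}(x_i-\x_i)\prod_{1\le i<j\le n}(x_i+\x_i-x_j-\x_j)$ under the same substitution. The factor $x_n-\x_n$ becomes $x_n^{-1}-x_n$, while every factor $x_i+\x_i-x_n-\x_n$ with $i<n$ is unchanged because $x_n+x_n^{-1}$ is invariant under $x_n\mapsto x_n^{-1}$; therefore the substituted $D$ equals $(x_n^{-1}-x_n)\det A_\emptyset$, with $\det A_\emptyset$ as in \cref{thm:odd}. Dividing, the common factor $x_n^{-1}-x_n$ cancels, giving $\spo_\lambda(x_1,\dots,x_{n-1},x_n^{-1};-x_n^{-1})=\det A_\lambda/\det A_\emptyset=\sp^{(n-1,1)}_\lambda(x_1,\dots,x_{n-1}|x_n)$, which is the asserted identity.

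There is no real obstacle here beyond careful bookkeeping. The two points that demand attention are the collapse of the last row of the numerator, where one must keep precise track of which power of $x_n$ survives the cancellation, and the observation that the mixed Weyl-denominator factors $x_i+\x_i-x_n-\x_n$ are insensitive to inverting $x_n$, so that exactly the same spurious factor $x_n^{-1}-x_n$ appears in numerator and denominator and cancels cleanly, signs included. It is also worth noting at the outset that $\spo_\lambda$ is a Laurent polynomial in its arguments, so that these substitutions are legitimate.
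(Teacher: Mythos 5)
Your proposal is correct and follows exactly the route the paper intends: the paper's entire proof of \cref{cor: odd} is the remark that the determinantal formulas of \cref{cor:m=1} and \cref{thm:odd} coincide after the substitution $x_n\mapsto x_n^{-1}$, $y\mapsto -x_n^{-1}$, and your computation (collapse of the last row to $(x_n^{-1}-x_n)$ times Okada's last row, matching factor $(x_n^{-1}-x_n)$ in the denominator) supplies precisely the bookkeeping the paper leaves implicit.
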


\section{Determinant formula for orthosymplectic Schur functions}
\label{sec:ortho}
In this section, we give a proof of \cref{thm:main}.
{We first prove \cref{thm:equiv-ortho}, which is
equivalent to \cref{thm:main}.}
Before that, let us recall the following lemmas.

\begin{lem}[{\cite[Lemma A.54]{FulHar91}}] 
\label{lem:evaluation}
Let 
$\zeta_j(p)=x_j^p-\x_j^p$, $1 \leq j \leq n$. Then for any integer $l \geq 0$, $\zeta_j(l)$ is the product of the $1 \times n$, $n \times n$ and $n \times 1$ matrices
    \[
    \left( 
    \begin{array}{c|c}
      \overline{H}_{l-n} &  \left( \overline{H}_{l-n-1+j}+ \overline{H}_{l-n+1-j} 
      \right)_{2 \leq j \leq n}   
      \end{array}
    \right)
    \Big( 
      (-1)^{v-u} e_{v-u}
    \Big)_{1 \leq u,v \leq n}
    \Big( 
      \zeta_j(n+1-u) 
    \Big)_{1 \leq u \leq n},
    \]
    where $\overline{H}_r=h_r(X,\overline{X})$ for all $r \in \mathbb{Z}$.
\end{lem}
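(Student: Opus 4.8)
The plan is to recast the matrix identity as a generating-function identity in an auxiliary variable $t$ and then isolate a single short polynomial identity. Fix the index $j$. Denote by $A^{(l)}=(A^{(l)}_1,\dots,A^{(l)}_n)$ the row vector in the statement, so $A^{(l)}_1=\overline{H}_{l-n}$ and $A^{(l)}_u=\overline{H}_{l-n-1+u}+\overline{H}_{l-n+1-u}$ for $2\le u\le n$; let $B=\bigl((-1)^{v-u}e_{v-u}(X,\overline{X})\bigr)_{1\le u,v\le n}$ be the (upper unitriangular) middle matrix and $D=\bigl(\zeta_j(n+1-u)\bigr)_{1\le u\le n}^{\mathrm{T}}$. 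Because $\overline{H}_r=0$ for $r<0$, one checks at once that $\sum_{l\ge0}A^{(l)}_u\,t^l=\overline{H}(t)\,\alpha_u(t)$, where $\overline{H}(t):=\sum_{r\ge0}\overline{H}_r t^r=\prod_{i=1}^n(1-x_it)^{-1}(1-\x_it)^{-1}$, $\alpha_1(t)=t^n$, and $\alpha_u(t)=t^{n+1-u}+t^{n-1+u}$ for $2\le u\le n$. Hence, with $\alpha(t)=(\alpha_1(t),\dots,\alpha_n(t))$, the lemma (for every $l\ge0$) is equivalent to the formal power series identity $\overline{H}(t)\,\alpha(t)\,B\,D=\sum_{l\ge0}\zeta_j(l)\,t^l=(x_j-\x_j)\,t\,(1-x_jt)^{-1}(1-\x_jt)^{-1}$. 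Since $\overline{H}(t)=(1-x_jt)^{-1}(1-\x_jt)^{-1}\,\overline{E}_{\hat{j}}(t)^{-1}$ with $\overline{E}_{\hat{j}}(t):=\prod_{i\ne j}(1-x_it)(1-\x_it)$, cancelling the common factor and multiplying through by $\overline{E}_{\hat{j}}(t)$ reduces the lemma to the polynomial identity
\[
\alpha(t)\,B\,D=(x_j-\x_j)\,t\,\overline{E}_{\hat{j}}(t).
\]

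To prove this I would use the elementary identity $\zeta_j(w)=x_j^w-\x_j^w=(x_j-\x_j)\,h_{w-1}(x_j,\x_j)$, which rewrites the column as $D=(x_j-\x_j)\bigl(h_{n-v}(x_j,\x_j)\bigr)_{1\le v\le n}^{\mathrm{T}}$; it therefore suffices to show $\sum_{v=1}^n\bigl(\alpha(t)B\bigr)_v\,h_{n-v}(x_j,\x_j)=t\,\overline{E}_{\hat{j}}(t)$. Expanding $\bigl(\alpha(t)B\bigr)_v=\sum_{u\le v}\alpha_u(t)(-1)^{v-u}e_{v-u}(X,\overline{X})$ and interchanging summation order, the left side equals $\sum_{u=1}^n\alpha_u(t)\sum_{k\ge0}(-1)^k e_k(X,\overline{X})\,h_{n-u-k}(x_j,\x_j)$, the truncation at $k=n-u$ imposed by $B$ being harmless since $h_{<0}=0$. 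The inner sum is a full convolution and hence equals $[s^{n-u}]\,\overline{E}(s)\,(1-x_js)^{-1}(1-\x_js)^{-1}=[s^{n-u}]\,\overline{E}_{\hat{j}}(s)=(-1)^{n-u}e_{n-u}(X_{\hat{j}},\overline{X}_{\hat{j}})$, where $\overline{E}(s)=\prod_i(1-x_is)(1-\x_is)=(1-x_js)(1-\x_js)\,\overline{E}_{\hat{j}}(s)$ and $X_{\hat{j}}$ (resp.\ $\overline{X}_{\hat{j}}$) is $X$ (resp.\ $\overline{X}$) with $x_j$ (resp.\ $\x_j$) deleted. Thus the claim reduces once more, to $\sum_{u=1}^n\alpha_u(t)\,(-1)^{n-u}e_{n-u}(X_{\hat{j}},\overline{X}_{\hat{j}})=t\,\overline{E}_{\hat{j}}(t)$.

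This last identity is a direct bookkeeping check. Write $\alpha_u(t)=t^{n+1-u}+t^{n-1+u}-\delta_{u,1}t^n$. Re-indexing by $r=n-u$, the $t^{n+1-u}$ part contributes $\sum_{r=0}^{n-1}(-1)^r e_r(X_{\hat{j}},\overline{X}_{\hat{j}})\,t^{r+1}$, while the $t^{n-1+u}$ part, after applying the palindromic symmetry $e_r(X_{\hat{j}},\overline{X}_{\hat{j}})=e_{2n-2-r}(X_{\hat{j}},\overline{X}_{\hat{j}})$ (valid because $\{x_i,\x_i:i\ne j\}$ is stable under $z\mapsto z^{-1}$ and $\prod_{i\ne j}x_i\x_i=1$), contributes $\sum_{r=n-1}^{2n-2}(-1)^r e_r(X_{\hat{j}},\overline{X}_{\hat{j}})\,t^{r+1}$. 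Adding these, every $r\in\{0,\dots,2n-2\}$ occurs once except $r=n-1$, which occurs twice; the surplus $(-1)^{n-1}e_{n-1}(X_{\hat{j}},\overline{X}_{\hat{j}})\,t^n$ is cancelled precisely by the correction term $-\delta_{u,1}t^n$, leaving $\sum_{r=0}^{2n-2}(-1)^r e_r(X_{\hat{j}},\overline{X}_{\hat{j}})\,t^{r+1}=t\,\overline{E}_{\hat{j}}(t)$, which is what was needed.

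I expect this to go through without a genuine obstacle; the only delicate points are recognizing that the $B$-truncated inner convolution coincides with the full convolution (so that the deleted variable $x_j$ disappears cleanly, leaving symmetric functions in $X_{\hat{j}}$), and managing the palindromic reindexing so that the two halves of $\alpha_u(t)$ tile $\{0,1,\dots,2n-2\}$ with exactly the right double count at $r=n-1$. The main practical risk is an off-by-one slip in the exponents of $t$ or in the summation ranges.
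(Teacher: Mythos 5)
Your proof is correct. Note that the paper itself contains no proof of this lemma: it is quoted verbatim from Fulton--Harris (Lemma A.54) and used as a black box, so there is no in-paper argument to compare against; your generating-function derivation is a legitimate, self-contained substitute. The individual steps all check out: the identity $\sum_{l\ge 0}A^{(l)}_u t^l=\overline{H}(t)\,\alpha_u(t)$ uses only $\overline{H}_r=0$ for $r<0$ together with $n+1-u\ge 0$ and $n-1+u\ge 0$; cancelling $(1-x_jt)^{-1}(1-\x_jt)^{-1}$ is licit because these factors are units in the formal power series ring, so the lemma is indeed equivalent to the polynomial identity $\alpha(t)\,B\,D=(x_j-\x_j)\,t\,\overline{E}_{\hat{j}}(t)$; the truncation coming from the triangularity of $B$ agrees with the full convolution since $h_{<0}=0$, and the full convolution is $[s^{n-u}]\,\overline{E}(s)(1-x_js)^{-1}(1-\x_js)^{-1}=(-1)^{n-u}e_{n-u}(X_{\hat{j}},\overline{X}_{\hat{j}})$; and the palindromy $e_r(X_{\hat{j}},\overline{X}_{\hat{j}})=e_{2n-2-r}(X_{\hat{j}},\overline{X}_{\hat{j}})$ (the $2n-2$ remaining variables are closed under inversion and have product $1$) makes the two halves of $\alpha_u$ cover $r\in\{0,\dots,2n-2\}$ with the double count at $r=n-1$ cancelled exactly by the $-\delta_{u,1}t^n$ correction, yielding $t\,\overline{E}_{\hat{j}}(t)$ and hence $\zeta_j(l)=A^{(l)}BD$ for every $l\ge 0$ (the degenerate case $n=1$, where $\overline{E}_{\hat{j}}\equiv 1$, is covered). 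The only implicit convention worth flagging is that $e_{v-u}$ in the middle matrix must be read as $e_{v-u}(X,\overline{X})$, i.e.\ in all $2n$ variables; this is what the paper intends (and what your computation requires), so there is no gap.
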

\begin{lem}[{\cite[Chapter I.1, Equation (1.7)]{macdonald-2015}}]
\label{lem:separate}
    Let $\lambda$ be a partition such that $\ell(\lambda) \leq n_1$ and $\lambda_1 \leq n_2$. Then 
    {$\{\lambda_i+n_1-i \mid 1 \leq i \leq n_1\}$ and 
    $\{n_1-1+j-\lambda_j' \mid 1 \leq j \leq n_2\}$}
    form a set partition of $\{0,1,\dots,n_1+n_2-1\}$.
\end{lem}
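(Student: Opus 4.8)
The plan is to treat this as a purely combinatorial statement about the Young diagram of $\lambda$, which by hypothesis fits inside the $n_1 \times n_2$ rectangle. Write $A = \{\lambda_i + n_1 - i \mid 1 \leq i \leq n_1\}$ and $B = \{n_1 - 1 + j - \lambda_j' \mid 1 \leq j \leq n_2\}$; here $\lambda_i$ is well-defined (and possibly zero) for every $i \leq n_1$ since $\ell(\lambda) \leq n_1$, and $\lambda_j'$ is well-defined for every $j \leq n_2$ since $\lambda_1 \leq n_2$. Because the target set $\{0, 1, \ldots, n_1 + n_2 - 1\}$ has exactly $n_1 + n_2$ elements, it suffices to establish three facts: (i) $A \cup B \subseteq \{0, \ldots, n_1 + n_2 - 1\}$; (ii) $|A| = n_1$ and $|B| = n_2$; and (iii) $A \cap B = \emptyset$. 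Together these force $|A \cup B| = n_1 + n_2$, and hence $A \sqcup B = \{0, \ldots, n_1 + n_2 - 1\}$.

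For (i), I would bound the extreme values. In $A$ the map $i \mapsto \lambda_i + n_1 - i$ attains its maximum $\lambda_1 + n_1 - 1 \leq n_1 + n_2 - 1$ at $i = 1$ and its minimum $\lambda_{n_1} \geq 0$ at $i = n_1$; symmetrically, in $B$ the map $j \mapsto n_1 - 1 + j - \lambda_j'$ runs between $n_1 - \lambda_1' = n_1 - \ell(\lambda) \geq 0$ at $j = 1$ and $n_1 + n_2 - 1 - \lambda_{n_2}' \leq n_1 + n_2 - 1$ at $j = n_2$. For (ii), these same two maps are strictly monotonic—strictly decreasing and strictly increasing, respectively—because $\lambda$ and $\lambda'$ are weakly decreasing, so each is injective and the two index sets have full cardinality $n_1$ and $n_2$.

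The crux is (iii), the disjointness. Suppose toward a contradiction that some element of $A$ equals some element of $B$, say $\lambda_i + n_1 - i = n_1 - 1 + j - \lambda_j'$ for some $1 \leq i \leq n_1$ and $1 \leq j \leq n_2$; this rearranges to the key relation $\lambda_i + \lambda_j' = i + j - 1$. I would then invoke the standard dichotomy for cell membership in a Young diagram: the box $(i,j)$ lies in $\lambda$ iff $j \leq \lambda_i$ iff $i \leq \lambda_j'$, and lies outside iff $j > \lambda_i$ iff $i > \lambda_j'$. In the first case $\lambda_i + \lambda_j' \geq i + j$, while in the second case $\lambda_i + \lambda_j' \leq (j-1) + (i-1) = i + j - 2$; in either case the value $i + j - 1$ is skipped, contradicting the relation above. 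This is the heart of the argument, and the only place where the partition structure—namely the conjugacy equivalence $\lambda_i \geq j \iff \lambda_j' \geq i$—is genuinely used; I expect it to be the sole substantive point, with (i) and (ii) reducing to routine monotonicity and boundary checks.

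Finally, I would assemble (i)--(iii) to conclude the set partition. An equivalent and arguably more transparent route is the lattice-path picture: reading the boundary of $\lambda$ inside the $n_1 \times n_2$ rectangle from one corner to the opposite one as a word of $n_1 + n_2$ unit steps labelled $0, 1, \ldots, n_1 + n_2 - 1$, the positions of the vertical steps are exactly the elements of $A$ and the positions of the horizontal steps are exactly the elements of $B$, so the two sets manifestly partition the label set. The algebraic computation of step (iii) is simply the shadow of this bijection, so if a cleaner write-up is desired I would phrase the whole proof in terms of this boundary path and dispense with the case analysis entirely.
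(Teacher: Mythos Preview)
Your proof is correct. The paper does not supply its own proof of this lemma; it is quoted from Macdonald's book (Chapter~I.1, Equation~(1.7)) as a standard fact, and your argument---bounding, strict monotonicity, and the disjointness step via the conjugacy equivalence $\lambda_i \ge j \iff \lambda_j' \ge i$---is precisely the classical proof one finds there.
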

\begin{thm} 
\label{thm:equiv-ortho}
Let $\lambda$ be a partition of length at most $n$ and {$k = \min \{j \mid \lambda_j + n + 1 - j \leq m\}.$} Then the orthosymplectic Schur function $\spo_{\lambda}(X;Y)$ is given by
\begin{equation}
\label{det-ortho-h}
\spo_{\lambda}(X;Y)
    =
\frac{(-1)^{mn-n+k-1}}{D} \det \left( \begin{array}{c|c}
   R  & X_{\lambda} \\[-0.3cm]\\\hline\\[-0.3cm]
   Y_{\lambda}  & 0
\end{array} \right),
\end{equation}
where
\[
R= \left( (-1)^{m-j} (x_i^{j}-\x_i^{j}) 
\right)_{\substack{1 \leq i\leq n \\ 1\leq j\leq m}}, \quad Y_{\lambda} = \left(h_{\lambda'_i-n-i+j}(Y)\right)_{\substack{1 \leq i\leq m-n+k-1 \\ 1\leq j\leq m}},\]
\[
X_{\lambda} = 
\left(
{x_i^{\lambda_j+n-m-j+1}}{\prod_{q=1}^m (x_i+y_q)}-
{\x_i^{\lambda_j+n-m-j+1}}{\prod_{q=1}^m (\x_i + y_q)} 
\right)_{\substack{1 \leq i\leq n \\ 1\leq j\leq k-1}},
\] $\psi$
and 
\[
D= 
{\prod_{i=1}^{n}
    (x_i-\x_i)
    \prod_{1 \leq i<j\leq n} 
(x_i+\x_i-x_j-\x_j)}.
\]
\end{thm}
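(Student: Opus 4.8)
To prove \cref{thm:equiv-ortho} (equivalently \cref{thm:main}, to which it reduces by elementary row and column operations), the plan is to derive \eqref{det-ortho-h} from the Jacobi--Trudi type formula \eqref{JT-ortho} in two stages: first turn the $n\times n$ Jacobi--Trudi determinant into a ``Weyl-type'' determinant, and then enlarge that into the claimed $(m+k-1)\times(m+k-1)$ block matrix.

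For the first stage, write $\spo_{\lambda}(X;Y)=\det M$ with $M$ the $n\times n$ matrix of \eqref{JT-ortho}, and set $\mu_i:=\lambda_i+n-i+1$, so that $\mu_1>\dots>\mu_n\ge 1$ and $k=\min\{i:\mu_i\le m\}$. Expanding $J_r=\sum_{l\ge 0}\overline{H}_{r-l}\,e_l(Y)$ gives $M=\sum_{l=0}^{m}e_l(Y)\,M^{(l)}$, where the $i$-th row of $M^{(l)}$ is precisely the row vector occurring in \cref{lem:evaluation} at level $\mu_i-l$. The first thing to verify is that this row is identically zero whenever $\mu_i-l<0$ (all of its $\overline{H}$-subscripts are then negative), so that \cref{lem:evaluation} applies to every surviving row. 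Multiplying $M$ on the right by the unitriangular matrix $A=\bigl((-1)^{v-u}e_{v-u}\bigr)_{1\le u,v\le n}$ and then by $Z=\bigl(x_j^{\,n+1-u}-\x_j^{\,n+1-u}\bigr)_{1\le u,j\le n}$, \cref{lem:evaluation} yields
\[
(MAZ)_{ij}=\sum_{l=0}^{\min(\mu_i,m)}e_l(Y)\bigl(x_j^{\mu_i-l}-\x_j^{\mu_i-l}\bigr).
\]
Because $\det A=1$ and $\det Z=D$ by \eqref{spdenom}, this establishes the intermediate identity $\spo_{\lambda}(X;Y)=\det(MAZ)/D$. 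Using $x_j^{0}-\x_j^{0}=0$ and $x_j^{-t}-\x_j^{-t}=-(x_j^{t}-\x_j^{t})$, the $i$-th row of $MAZ$ equals $\bigl(x_j^{\mu_i-m}\prod_{q}(x_j+y_q)-\x_j^{\mu_i-m}\prod_q(\x_j+y_q)\bigr)_j$ when $i\le k-1$, and equals $\bigl(\sum_{r=1}^{m}e_{\mu_i-r}(Y)(x_j^{r}-\x_j^{r})\bigr)_j$ when $i\ge k$. Transposing $MAZ$, its first $k-1$ columns are exactly the block $X_{\lambda}$ of \cref{thm:equiv-ortho}, the $r$-th column of $R$ in \eqref{det-ortho-h} is $(-1)^{m-r}$ times $v_r:=(x_j^{r}-\x_j^{r})_j$, and each of the remaining $n-k+1$ columns of $(MAZ)^{\top}$ is the explicit $\mathbb{Z}[Y]$-combination $\sum_{r=1}^{m}(-1)^{m-r}e_{\mu_j-r}(Y)\,R[:,r]$ of columns of $R$ (for $k\le j\le n$). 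That no ``folded'' term $e_a-e_b$ appears in these coefficients is precisely what the vanishing of the negative-level rows of $M^{(l)}$ buys.

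For the second stage, expand $\det(MAZ)=\det\bigl((MAZ)^{\top}\bigr)$ by multilinearity in its last $n-k+1$ columns and then by a Laplace expansion along its first $k-1$ columns. This produces a double sum
\[
\det(MAZ)=\sum_{S,U}\ \pm\,\det\!\bigl(X_{\lambda}[S,:]\bigr)\,\det\!\bigl(R[\bar S,U]\bigr)\,\det\!\bigl(g[U,:]\bigr),
\]
over $(k-1)$-subsets $S\subseteq[n]$ with $\bar S=[n]\setminus S$ and $(n-k+1)$-subsets $U\subseteq[m]$, where $g_{rj}=(-1)^{m-r}e_{\mu_{j+k-1}-r}(Y)$. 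On the other hand, Laplace-expanding the block matrix in \eqref{det-ortho-h} along its last $k-1$ columns and then along its last $m-n+k-1$ rows gives $\sum_{S,\bar U}\pm\det(X_{\lambda}[S,:])\,\det(R[\bar S,U])\,\det(Y_{\lambda}[:,\bar U])$ with $\bar U=[m]\setminus U$. Matching these two sums term by term reduces the theorem to the single determinantal identity
\[
\det\!\bigl(e_{\mu_{j+k-1}-r}(Y)\bigr)_{r\in U,\ 1\le j\le n-k+1}
=\ \pm\,\det\!\bigl(h_{\lambda'_i-n-i+b}(Y)\bigr)_{1\le i\le m-n+k-1,\ b\in\bar U},
\]
which is an instance of the dual Jacobi--Trudi relation (conjugation of a skew shape). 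The content here is that the index data on the two sides genuinely are conjugate, and this is exactly where \cref{lem:separate} enters: it provides the required complementarity of index sets (the set $\{\mu_i:i\ge k\}$ on one side versus a set built from $\lambda'$ on the other, partitioning an interval of integers), which is the mechanism that turns the $e$-minor into an $h$-minor indexed by the conjugate $\lambda'$ appearing in $Y_{\lambda}$.

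The main obstacle I expect is the second stage. One must check that the numerous $(S,U)$-dependent signs coming out of the two Laplace/multilinearity expansions cancel down to a single global sign, and that this global sign is exactly $(-1)^{mn-n+k-1}$; and one must feed the shifted-part data of $\lambda$ into \cref{lem:separate} carefully enough that it is the honest conjugate partition $\lambda'$ (rather than some other reindexing of $\lambda$) that appears in $Y_{\lambda}$. By contrast the first stage is largely mechanical, the only genuine subtlety being the vanishing of the negative-level rows of $M^{(l)}$.
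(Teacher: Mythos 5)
Your first stage coincides with the paper's: starting from \eqref{JT-ortho}, expanding $J_r=\sum_p \overline{H}_{r-p}e_p(Y)$, checking that the rows at negative level vanish, and multiplying by the unitriangular $e$-matrix and by $\bigl(x_j^{n+1-u}-\x_j^{n+1-u}\bigr)$ via \cref{lem:evaluation} is exactly how the paper reaches the intermediate determinant with the $B$-rows (your $X_\lambda$-type rows for $i\le k-1$) and the truncated $A$-rows (your $\sum_r e_{\mu_i-r}(Y)(x_j^r-\x_j^r)$ rows for $i\ge k$), with the same denominator $D$ from \eqref{spdenom}; your observation about why no ``folded'' coefficients appear is precisely the point the paper handles when splitting the rows at index $k$. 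Where you genuinely diverge is the second stage. The paper never splits into minors: it borders the $n\times n$ determinant with the unitriangular block $\bigl(h_{-i+j}(Y)\bigr)_{i,j\in\Lambda'}$ and the columns $\bigl((-1)^{m-j}(x_i^j-\x_i^j)\bigr)_{j\in\Lambda'}$, then performs explicit column operations $C_j\to C_j-\sum_p(-1)^{m-p}e_{j-p}(Y)C_p$ for $j\in\Lambda$ (using the alternating $e$--$h$ relation and the fact from \cref{lem:separate} that $\Lambda\sqcup\Lambda'=[m]$), so the $e$-combinations in the $A$-columns are absorbed and the $h$-entries of $Y_\lambda$ are created inside one determinant, with the global sign $(-1)^{mn-n+k-1}$ read off from counting the row permutation, the block move, and the factors $(-1)^{m-j}$. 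Your route instead Laplace/Cauchy--Binet-expands both determinants and matches minors termwise, reducing to the complementary-minor identity between $e$- and $h$-Toeplitz minors; this is sound (it is Jacobi's complementary-minor theorem applied to the mutually inverse unitriangular Toeplitz matrices $\bigl(e_{j-i}\bigr)$ and $\bigl((-1)^{j-i}h_{j-i}\bigr)$, i.e.\ the dual Jacobi--Trudi mechanism, with \cref{lem:separate} supplying the complementarity of $\Lambda$ and $\Lambda'$, the same place the paper uses it), and it is conceptually attractive because it exposes the skew-shape conjugation hiding behind $Y_\lambda$. What it costs is exactly what you flag: the termwise matching is not a consequence of equality of the two sums but the thing to be proved, so you must carry out the per-subset sign computation — the Laplace sign in $U$, the sign $(-1)^{\sum_{r\in U}(m-r)}$ from stripping the $(-1)^{m-r}$ factors, the $(-1)^{\sigma(U)+\sigma(\Lambda)}$-type sign in Jacobi's theorem, and the sign from de-signing the $h$-Toeplitz matrix — and verify that the $U$-dependence cancels and the residue is $(-1)^{mn-n+k-1}$; the paper's augmentation-plus-column-operations argument is designed precisely to avoid this bookkeeping. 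With that sign verification written out (it does close, since each source contributes $\pm\sigma(U)$ modulo $2$ and the contributions pair off), your argument would be a complete and legitimately different proof of \cref{thm:equiv-ortho}.
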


\begin{proof}
 By {the} Jacobi--Trudi type identity for orthosymplectic Schur functions \eqref{JT-ortho}, we have   
 \begin{equation}
 \label{1}
 \spo_{\lambda}(X;Y)=
 { \det \left(
    \begin{array}{c|c} 
    \left(J_{\lambda_i-i+1}\right)_{1 \leq i \leq n} & 
    (J_{\lambda_i-i+j}+J_{\lambda_i-i-j+2})_{\substack{1 \leq i \leq n\\
    2 \leq j \leq n}}  
    \end{array} \right)},  
 \end{equation}
where $J_r=\ds \sum_{p=0}^r h_p(X,\overline{X}) e_{r-p}(Y)= \ds \sum_{p=0}^r h_{r-p}(X,\overline{X}) e_{p}(Y)$ for all $r \geq 0$ {and $J_r=0$ otherwise.} Since $e_{p}(Y)=0$ for $p>m$ and $h_{r-p}(X,\overline{X})=0$, for $p>r$, 
\begin{equation}
\label{2}
J_r=\ds \sum_{p=0}^{m} h_{r-p}(X,\overline{X}) e_{p}(Y), \quad r \in \mathbb{Z}.
\end{equation}
Substituting the expression for $J_r$ from \eqref{2} in \eqref{1}, the required orthosymplectic Schur function is given by
    \[ 
   \det \left(
   \begin{array}{c|c}
 \left( \ds \sum_{p=0}^m 
\overline{H}_{\lambda_i-i+1-p} e_p(Y)
\right)_{1 \leq i \leq n}
&
 \left( \ds \sum_{p=0}^m 
(\overline{H}_{\lambda_i-i+j-p}  +
    \overline{H}_{\lambda_i-i-j+2-p}) e_p(Y) \right)_{\substack{1 \leq i \leq n\\
    2 \leq j \leq n}}  
    \end{array}
    \right)
    \]
where $\overline{H}_r=h_r(X,\overline{X})$ for all $r \in \mathbb{Z}$. 
   {Observe that 
    \begin{equation}
    \label{spo}
        \spo_{\lambda}(X;Y)=\det \left(
    \sum_{p=0}^m  e_p(Y) M^{p}
    \right),
    \end{equation}
where \[
    M^{p} =  \left(
   \begin{array}{c|c}
 \left( \ds 
\overline{H}_{\lambda_i-i+1-p} 
\right)_{1 \leq i \leq n}
&
 \left( \ds
\overline{H}_{\lambda_i-i+j-p}  +
    \overline{H}_{\lambda_i-i-j+2-p} \right)_{\substack{1 \leq i \leq n\\
    2 \leq j \leq n}}  
    \end{array}
    \right).
    \]
If $\lambda_i+n-i+1-p \geq 0$, then by \cref{lem:evaluation}, we have     
    \[
    (M^{p}_{i,v})_{1 \leq v \leq n} \left( 
      (-1)^{v-u} e_{v-u}
    \right)_{1 \leq u,v \leq n}
    \left( 
    x_j^{n+1-u}-\x_j^{n+1-u} 
    \right)_{1 \leq u \leq n} = x_j^{\lambda_i+n-i+1-p} -
   \x_j^{\lambda_i+n-i+1-p}
    \]
and 
$M^{p}_{i,v}=0$ for all $1 \leq v \leq n$ otherwise.    
 As a consequence, we have 
 \begin{equation}
\label{M}
 \begin{split}
  \left(\sum_{p=0}^m  e_p(Y)M^{p} \right) & \Big( 
      (-1)^{v-u} e_{v-u}
    \Big)_{1 \leq u,v \leq n} 
    \Big( 
    x_j^{n+1-u}-\x_j^{n+1-u} 
    \Big)_{1 \leq u \leq n} \\
    = & \left( \begin{array}{c}
     \left( \ds \sum_{p=0}^m 
     \left(
    x_j^{\lambda_i+n-i+1-p} -
   \x_j^{\lambda_i+n-i+1-p} 
   \right)
   e_p(Y) \right)_{1 \leq i \leq k-1} \\[-0.3cm]\\\hline\\[-0.3cm]
      \left(
   \ds   \sum_{p=0}^{\lambda_i+n-i} 
      \left(
    x_j^{\lambda_i+n-i+1-p} - \x_j^{\lambda_i+n-i+1-p} \right)
    e_p(Y) \right)_{k \leq i \leq n} 
\end{array} \right)_{n \times 1}.
\end{split}
    \end{equation}
Therefore, using the expressions given in \eqref{spo},\eqref{M}
and the fact that 
\[
\det. \left( 
      (-1)^{v-u} e_{v-u}
    \right)_{1 \leq u,v \leq n} = 1,
    \]
    the orthosymplectic Schur function $\spo_{\lambda}(X;Y)$ is given by  
    \begin{equation}
 \label{(1)}
 \frac{1}{D} \det \left( \begin{array}{c}
     \left( \ds \sum_{p=0}^m 
     \left(
    x_j^{\lambda_i+n-i+1-p} -
   \x_j^{\lambda_i+n-i+1-p} 
   \right)
   e_p(Y) \right)_{1 \leq i \leq k-1} \\[-0.3cm]\\\hline\\[-0.3cm]
      \left(
   \ds   \sum_{p=0}^{\lambda_i+n-i} 
      \left(
    x_j^{\lambda_i+n-i+1-p} - \x_j^{\lambda_i+n-i+1-p} \right)
    e_p(Y) \right)_{k \leq i \leq n} 
\end{array} \right)_{1 \leq j \leq n},    
 \end{equation}   
where 
\[D= \det \Big( 
x_j^{n+1-u}-\x_j^{n+1-u}
    \Big)_{1 \leq u,j \leq n} =
\ds \prod_{i=1}^{n}
    (x_i-\x_i)
    \prod_{1 \leq i<j\leq n} 
(x_i+\x_i-x_j-\x_j).\]}
 Note that
 \begin{equation}
 \label{(2)}
    \sum_{p=0}^m 
    x_i^{-p} e_p(Y) = \prod_{j=1}^m (1+\x_i y_j),
    \qquad
    \sum_{p=0}^m 
    x_i^{p} e_p(Y) = \prod_{i=1}^m (1+x_i y_j). 
 \end{equation}
 Consider the permutation {$\sigma=(k,\dots,n,1,\dots,k-1) \in$}
 $S_n$. The number of inversions of $\sigma$ is $(k-1)(n-k+1)$.
{Permuting the rows of the determinant in \eqref{(1)} by $\sigma$ and then transposing the determinant, we have}
  \begin{equation}
  \label{(3)}
      \spo_{\lambda}(X;Y)=\frac{\sgn(\sigma)}{D} \det
\left(
\begin{array}{c|c}
    (A_{i,j})_{\substack{1 \leq i \leq n\\
    k \leq j \leq n}}  & (B_{i,j})_{\substack{1 \leq i \leq n\\
    1 \leq j \leq k-1}} 
\end{array}
\right),
  \end{equation} 
where 
\[
A_{i,j}= \sum_{p=0}^{\lambda_j+n-j} 
      \left(
    x_i^{\lambda_j+n-j+1-p} - \x_i^{\lambda_j+n-j+1-p} \right)
    e_p(Y)
\]
and
\begin{multline*}
 B_{i,j}= \sum_{p=0}^{m} 
      \left(
    x_i^{\lambda_j+n-j+1-p} - \x_i^{\lambda_j+n-j+1-p} \right)
    e_p(Y)
  \\  = 
    x_i^{\lambda_j+n-j+1}\left(\prod_{q=1}^m (1+\x_i y_q)\right)-\x_i^{\lambda_j+n-j+1}\left(\prod_{q=1}^m (1+x_i y_q)\right),   
\end{multline*}
where the last equality uses the relations given in \eqref{(2)}.
Let 
{$\Lambda \coloneqq \{\lambda_j+n-j+1 \mid k \leq j \leq n\}$, $\Lambda' \coloneqq \{n+i-\lambda_i'\mid 1 \leq i \leq m-n+k-1\}$ and  
\[
M \coloneqq
\left(
\begin{array}{c|c|c}
((-1)^{m-j}(x_i^j-\x_i^j))_{\substack{1 \leq i \leq n\\
    j \, \in \, \Lambda'}}
   & (A_{i,j})_{\substack{1 \leq i \leq n\\
    k \leq j \leq n}}  & 
    (B_{i,j})_{\substack{1 \leq i \leq n\\
    1 \leq j \leq k-1}} \\
    &&\\\hline&\\
   (h_{-i+j}(Y))_{\substack{
   i \, \in \, \Lambda'\\
    j \, \in \, \Lambda'}}
    & {\normalfont\text{\huge0}} & {\normalfont\text{\huge0}}
\end{array}
\right).
\]
Since
\[\det (h_{-i+j}(Y))_{\substack{
   i \, \in \, \Lambda'\\ 
    j \, \in \, \Lambda'}} = 1,
    \]   
    \[
    \det M= (-1)^{n(m - n + k - 1)} \det
\left(
\begin{array}{c|c}
    (A_{i,j})_{\substack{1 \leq i \leq n\\
    k \leq j \leq n}}  & (B_{i,j})_{\substack{1 \leq i \leq n\\
    1 \leq j \leq k-1}} 
\end{array}
\right).
    \]}
So, we can rewrite \eqref{(3)} 
in the following manner:
\[
\spo_{\lambda}(X;Y)=\frac{(-1)^b}{D} \det
\left(
\begin{array}{c|c|c}
((-1)^{m-j}(x_i^j-\x_i^j))_{\substack{1 \leq i \leq n\\
    j \, \in \, \Lambda'}}
   & (A_{i,j})_{\substack{1 \leq i \leq n\\
    k \leq j \leq n}}  & 
    (B_{i,j})_{\substack{1 \leq i \leq n\\
    1 \leq j \leq k-1}} \\
    &&\\\hline&\\
   (h_{-i+j}(Y))_{\substack{
   i \, \in \, \Lambda'\\
    j \, \in \, \Lambda'}}
    & {\normalfont\text{\huge0}} & {\normalfont\text{\huge0}}
\end{array}
\right),
\]  
where $b=(k-1)(n-k+1)+n(m-n+k-1)=mn-(n-k+1)^2$. 
Observe that 
\[
(A_{i,j})_{\substack{1 \leq i \leq n\\
    k \leq j \leq n}} = \left(
    \sum_{p=0}^{j-1}(x_i^{j-p}-\x_i^{j-p})e_p(Y)
    \right)_{\substack{1 \leq i \leq n\\
    j \, \in \, \Lambda}}
\]
So, the orthosymplectic Schur function $\spo_{\lambda}(X;Y)$ is given by 
\begin{equation}
\label{aa}
    \frac{(-1)^b}{D} \det
\left(
\begin{array}{c|c|c}
\left((-1)^{m-j}\phi_i(j)\right)_{\substack{1 \leq i \leq n\\
    j \, \in \, \Lambda'}}
   & 
   \left(
    \sum_{p=0}^{j-1} \phi_i(j-p) e_p(Y)
    \right)_{\substack{1 \leq i \leq n\\
    j \, \in \, \Lambda}}
   & 
    (B_{i,j})_{\substack{1 \leq i \leq n\\
    1 \leq j \leq k-1}} \\
    &&\\\hline&\\
   (h_{-i+j}(Y))_{\substack{
   i \, \in \, \Lambda'\\
    j \, \in \, \Lambda'}}
    & {\normalfont\text{\huge0}} & {\normalfont\text{\huge0}}
\end{array}
\right),
\end{equation}
where $\phi_i(j)=x_i^j-\x_i^j$. 
{Note that $\{\Lambda, \Lambda'\}$ is a set partition of $\{1,2,\dots,m\}$ by \cref{lem:separate}. 
Applying the elementary column transformation} 
$C_{\lambda_n+1} \rightarrow C_{\lambda_n+1} - \sum_{p=1}^{\lambda_n} (-1)^{m-p} e_{\lambda_n+1-p}(Y) C_{p}$ and using 
\[
- \sum_{p=1}^{\lambda_n} (-1)^{m-p} e_{\lambda_n+1-p}(Y)  h_{-i+p}(Y)  = (-1)^{m-\lambda_n+1} h_{-i+\lambda_n+1}(Y),
\]
the {expression} in \eqref{aa} is the same as
\[
{\footnotesize
\frac{(-1)^b}{D} 
\left|
\begin{array}{c|c|c|c}
\left(\hspace{-0.025in}(-1)^{m-j}\phi_i(j)\hspace{-0.025in}\right)_{\substack{1 \leq i \leq n\\
    j \, \in \, \Lambda'}}
   & 
   \left(
    \ds \sum_{p=0}^{j-1} \phi_i(j-p) e_p(Y)
    \hspace{-0.05in} \right)_{\substack{1 \leq i \leq n\\
    j \, \in \, \Lambda\\ j \neq \lambda_n+1}}
    &
   (\phi_i(j))_{\substack{1 \leq i \leq n\\j=\lambda_n+1}}
   & 
    (B_{i,j})_{\substack{1 \leq i \leq n\\
    1 \leq j \leq k-1}} \\
    &&&\\\hline&&\\
   (h_{-i+j}(Y))_{\substack{
   i \, \in \, \Lambda'\\
    j \, \in \, \Lambda'}}
    & {\normalfont\text{\huge0}} & 
    \left(\hspace{-0.025in}(-1)^{m-j} h_{-i+j}(Y) \hspace{-0.025in}\right)_{\substack{i \in \Lambda'\\j=\lambda_n+1}}
    &
    {\normalfont\text{\huge0}}
\end{array}
\right|.
}
\]
Successively applying the {elementary column transformations} $C_j \rightarrow C_j-\sum_{p=1}^{j-1} (-1)^{m-p} e_{j-p}C_p$, for $j \in \Lambda$, we have 
\[
\spo_{\lambda}(X;Y)=\frac{(-1)^b}{D} \det
\left(
\begin{array}{c|c|c}
\left((-1)^{m-j}\phi(j)\right)_{\substack{1 \leq i \leq n\\
    j \, \in \, \Lambda'}}
    &
   (\phi(j))_{\substack{1 \leq i \leq n\\ 
    j \, \in \, \Lambda}}
   & 
    (B_{i,j})_{\substack{1 \leq i \leq n\\
    1 \leq j \leq k-1}} \\
    &&\\\hline&\\
   (h_{-i+j}(Y))_{\substack{
   i \, \in \, \Lambda'\\
    j \, \in \, \Lambda'}}
     & 
    ((-1)^{m-j} h_{-i+j}(Y))_{\substack{
   i \, \in \, \Lambda'\\
    j \, \in \, \Lambda}}
    &
    {\normalfont\text{\huge0}}
\end{array}
\right). 	
\]
Taking $(-1)^{m-j}$ out from the $j^{\text{th}}$ column for each $j \in \Lambda$, the orthosymplectic Schur function is given by 
\[
\spo_{\lambda}(X;Y)=\frac{(-1)^{b'}}{D} \det
\left(
\begin{array}{c|c|c}
\left((-1)^{m-j}\phi(j)\right)_{\substack{1 \leq i \leq n\\
    j \, \in \, \Lambda'}}
    &
   ((-1)^{m-j} \phi(j))_{\substack{1 \leq i \leq n\\ 
    j \, \in \, \Lambda}}
   & 
    (B_{i,j})_{\substack{1 \leq i \leq n\\
    1 \leq j \leq k-1}} \\
    &&\\\hline&\\
   (h_{-i+j}(Y))_{\substack{
   i \, \in \, \Lambda'\\
    j \, \in \, \Lambda'}}
     & 
    (h_{-i+j}(Y))_{\substack{
   i \, \in \, \Lambda'\\
    j \, \in \, \Lambda}}
    &
    {\normalfont\text{\huge0}}
\end{array}
\right). 	
\]
where $b'=b+\sum_{j \in \Lambda} (m-j)$. Rearranging the columns, we have 
\[
\spo_{\lambda}(X;Y) = \frac{(-1)^a}{D}  \det \left( \begin{array}{c|c}
\left( (-1)^{m-j} (x_i^{j}-\x_i^j) 
\right)_{\substack{1 \leq i\leq n \\ 1 \leq j \leq m}} 
&
(B_{i,j})_{\substack{1 \leq i \leq n\\
    1 \leq j \leq k-1}}
\\[1.75em]
\hline\\
 \left(h_{\lambda'_i-n-i+j}(Y)
\right)_{\substack{1 \leq i\leq m-n+k-1 \\ 1 \leq j\leq m}} 
&
{\normalfont\text{\huge0}}
\end{array} \right),
\]
where $a=mn-n+k-1$. This completes the proof. 
\end{proof} 
Now we prove that the expressions for $\spo_{\lambda}(X;Y)$ in \cref{thm:main} and \cref{thm:equiv-ortho} are the same. 
\begin{proof}[{Proof of \cref{thm:main}}]  
Let $a=mn-n+k-1$. {Consider the following expression given in \eqref{det-ortho}.}
\begin{equation*}
 {A} \coloneqq  \frac{(-1)^{a}}{D_s} 
 \det \left( 
 \begin{array}{c|c}
   \left( \ds \frac{x_i}
{(x_i+y_j)\prod_{q=1}^m (\x_i + y_q) }
-
\ds \frac{\x_i}
{(\x_i+y_j)\prod_{q=1}^m (x_i + y_q) }
\right)_{\substack{1 \leq i\leq n \\ 1\leq j\leq m}} & X_{\lambda}
\\[1.75em]
\hline\\
  \left(y_j^{\lambda'_i+m-n-i}\right)_{\substack{1 \leq i\leq m-n+k-1 \\ 1\leq j\leq m}} & {\normalfont\text{\huge0}}
\end{array} \right),
\end{equation*}
where
\[
X_{\lambda} = \left(
\frac{x_i^{\lambda_j+n-m-j+1}}
{\prod_{q=1}^m (\x_i + y_q)}-
\frac{\x_i^{\lambda_j+n-m-j+1}}
{\prod_{q=1}^m (x_i+y_q)}
\right)_{\substack{1 \leq i\leq n \\ 1\leq j\leq k-1}} \]
and 
\[
D_s= \frac
{\prod_{i=1}^{n}
    (x_i-\x_i)
    \prod_{1 \leq i<j\leq n} 
(x_i+\x_i-x_j-\x_j)
\prod_{1 \leq i<j\leq m} (y_i-y_j)
}{\prod_{i=1}^n \prod_{j=1}^m (x_i+y_j) (\x_i+y_j)}.
\]
Applying the elementary row transformations $R_i \rightarrow \prod_{q=1}^m (x_i+y_q)(\x_i+y_q) R_i$, for $i \in [1,n]$ and cancelling the term $\prod_{i=1}^n \prod_{j=1}^m (x_i+y_j) (\x_i+y_j)$ from the denominator of $D_s$,  
the expression for {$A$ can be written as}
\[
\frac{(-1)^{a}}
{D_s^1} 
\det \left( \begin{array}{c|c}
   \left( 
   x_i \prod_{\substack{q=1\\ q \neq j}}^{m} 
   (x_i+y_q)
   -\x_i \prod_{\substack{q=1\\ q \neq j}}^{m}  (\x_i+y_q)
\right)_{\substack{1 \leq i\leq n \\ 1\leq j\leq m}}  
& 
 Z_{\lambda} \\[1.75em]
\hline\\
  \left(y_j^{\lambda'_i+m-n-i}\right)_{\substack{1 \leq i\leq m-n+k-1 \\ 1\leq j\leq m}} & {\normalfont\text{\huge0}}
\end{array} \right),
\]
where 
\[
D_s^1 = \prod_{i=1}^{n}
    (x_i-\x_i)
    \prod_{1 \leq i<j\leq n} 
(x_i+\x_i-x_j-\x_j)
\prod_{1 \leq i<j\leq m} (y_i-y_j)
\]
and 
\[
Z_{\lambda} = 
\left(
{x_i^{\lambda_j+n-m-j+1}}{\prod_{q=1}^m (x_i+y_q)}-
{\x_i^{\lambda_j+n-m-j+1}}{\prod_{q=1}^m (\x_i + y_q)} 
\right)_{\substack{1 \leq i\leq n \\ 1\leq j\leq k-1}}.
\]
Let 
\[
M^{k}_{i,j}=x_i
\prod_{\substack{q=1\\ q \neq j}}^{k} 
   (x_i+y_q)
   -\x_i 
   \prod_{\substack{q=1\\ q \neq j}}^{k}  (\x_i+y_q).
\]
 Applying the elementary column transformations $C_j \rightarrow C_j-C_m$, $j \in [1,m-1]$ and then using $y_j^p-y_m^p=(y_j-y_m) h_{p-1}(y_j,y_m)$ to factor out the term $y_j-y_m$ for $j \in [1,m-1]$,
$A$ is given by  
 \[
 \frac{(-1)^{a}}{D_s^2} \det \left( \begin{array}{c|c|c}
   \left(- M^{m-1}_{i,j}
\right)_{\substack{1 \leq i\leq n \\ 1\leq j\leq m-1}} 
& 
\left( M^{m}_{i,j}
\right)_{1 \leq i\leq n}
&  Z_{\lambda}
\\&\\\hline&\\
  \left(h_{\lambda'_i+m-n-i-1}(y_j,y_m)\right)_{\substack{1 \leq i\leq m-n+k-1 \\ 1\leq j\leq m-1}} 
  &  \left(h_{\lambda'_i+m-n-i}(y_m)\right)_{1 \leq i\leq m-n+k-1 }
  & {\normalfont\text{\huge0}}
\end{array} \right), 
  \]
  where 
  \[
  D_s^2 = \prod_{i=1}^{n}
    (x_i-\x_i)
    \prod_{1 \leq i<j\leq n} 
(x_i+\x_i-x_j-\x_j)
\prod_{1 \leq i<j\leq m-1} (y_i-y_j).
  \]
  Now applying the elementary column transformations $C_j \rightarrow C_j-C_{s}$, 
  $j \in [1,s-1]$ and 
  using 
  $
  h_p(y_j,y_{s+1},y_{s+2},\dots,y_m)-h_p(y_s,y_{s+1},y_{s+2},\dots,y_m)=(y_j-y_s) 
  h_{p-1}(y_j,y_s,y_{s+1},y_{s+2},\dots,y_m),
  $
    successively for $s=m-1,m-2,\dots,2$, we have 
\begin{equation*}
    A= \frac{(-1)^{a}}{D} \det \left( \begin{array}{c|c}
   \left( (-1)^{m-j} 
(x_i \prod_{q=1}^{j-1} 
   (x_i+y_q)-
  \x_i \prod_{q=1}^{j-1} 
   (\x_i+y_q))
\right)_{\substack{1 \leq i\leq n \\ 1 \leq j \leq m}} 
&  
Z_{\lambda}
\\[1.75em]
\hline\\
  \left(h_{\lambda'_i-n-i+j}(y_j,\dots,y_m)\right)_{\substack{1 \leq i\leq m-n+k-1 \\ 1\leq j\leq m}} 
  & {\normalfont\text{\huge0}}
\end{array} \right),
\end{equation*}
where
\[
  D = \prod_{i=1}^{n}
    (x_i-\x_i)
    \prod_{1 \leq i<j\leq n} 
(x_i+\x_i-x_j-\x_j).
  \]
Applying the elementary column transformations $C_j \rightarrow C_j + y_{j-1} C_{j-1}$, for $j \in [2,m]$ and 
using 
$h_n(y_{j-1},\dots,y_m)= h_n(y_{j},\dots,y_m)+y_{j-1} h_{n-1}(y_{j-1},\dots,y_m)$, 
$A$ is given by 
\[
\frac{(-1)^{a}}{D} \det \left( \begin{array}{c|c|c}
\left((-1)^{m-1} (x_i-\x_i)\right)_{1 \leq i \leq n}
&
   \left( N_{i,j}^2
\right)_{\substack{1 \leq i\leq n \\ 2 \leq j \leq m}} 
&
 Z_{\lambda}
\\&&\\\hline&&\\ 
 \left(h_{\lambda'_i-n-i+1}(Y)\right)_{1 \leq i\leq m-n+k-1 }
 &
 \left(h_{\lambda'_i-n-i+j}(y_{j-1},\dots,y_m)\right)_{\substack{1 \leq i\leq m-n+k-1 \\ 2 \leq j\leq m}} 
 &
 {\normalfont\text{\huge0}}
\end{array} \right),
\]
where 
\[
N_{i,j}^{s} = (-1)^{m-j} 
\left(x_i^s \prod_{q=1}^{j-s} 
   (x_i+y_q)-\x_i^s
   \prod_{q=1}^{j-s} 
   (\x_i+y_q)\right).
\]
Now again applying the elementary column transformations $C_j \rightarrow C_j + y_{j-2} C_{j-1}$, for $j \in [3,m]$, we see that 
\[
A=\frac{(-1)^{a}}{D} \det \left( \begin{array}{c|c|c}
\left((-1)^{m-j} (x_i^{j}-\x_i^j)
\right)_{\substack{1 \leq i\leq n \\ 1 \leq j \leq 2}} 
& \left( N_{i,j}^3 
\right)_{\substack{1 \leq i\leq n \\ 3 \leq j \leq m}}  & Z_{\lambda} 
\\&&\\\hline&&\\ 
 \left(h_{\lambda'_i-n-i+j}(Y)\right)_{\substack{1 \leq i\leq m-n+k-1\\1 \leq j \leq 2}}
 &
\left(h_{\lambda'_i-n-i+j}(y_{j-2},\dots,y_m)\right)_{\substack{1 \leq i\leq m-n+k-1 \\ 2 \leq j\leq m}} 
  & {\normalfont\text{\huge0}}
\end{array} \right).
\]
Proceeding in a similar fashion, we have 
\begin{equation}
\label{final}
A =
\frac{(-1)^{a}}{D} \det \left( \begin{array}{c|c}
\left( (-1)^{m-j} (x_i^{j}-\x_i^j) 
\right)_{\substack{1 \leq i\leq n \\ 1 \leq j \leq m}} 
&
Z_{\lambda}
\\[1.75em]
\hline\\
 \left(h_{\lambda'_i-n-i+j}(Y)
\right)_{\substack{1 \leq i\leq m-n+k-1 \\ 1 \leq j\leq m}} 
&
{\normalfont\text{\huge0}}
\end{array} \right).
\end{equation}
Observe that the expression in \eqref{final} is the same as the expression in \eqref{det-ortho-h}, which is equal to $\spo_{\lambda}(X;Y)$. 
This completes the proof.
\end{proof} 

\section{A generalization of Brent--Krattenthaler--Warnaar's identity}
\label{sec:bkw}
In this section, we prove a generalization of the following odd symplectic character identity due to Brent, Krattenthaler and Warnaar,
which was found in the study of discrete Mehta-type integrals~\cite{brent2016discrete}.
Recently, Okada gave {a} linear-algebraic proof~\cite{Okada2019ABF}.

\begin{thm}[{\cite[Theorem 1.2]{Okada2019ABF}}]
    Let $m$ and $n$ be positive integers with $n \leq m$ and $r$ a non-negative integer. 
    Then 
    \[
    \sum_{\lambda} z^{-r}  
    \sp^{(n,1)}_{\lambda}(X|z) 
    \, \sp^{(m,1)}_{(r^{m-n}) \cup \lambda}(Y|z)  =  
    \sp_{(\underbrace{r,\dots,r}_{m+n+1})}(X,Y,z),
    \]
where $\lambda$ runs over partitions of length at most $n+1$ such that $\lambda_1 \leq r$, and $(r^{m-n}) \cup \lambda$ {denotes} the partition $(\underbrace{r,\dots,r}_{m-n},\lambda_1,\dots,\lambda_{n+1})$. 
\end{thm}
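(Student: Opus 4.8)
The plan is to deduce the identity from the determinantal formulas established earlier in the paper, reducing everything to a Cauchy--Binet manipulation. By \cref{cor: odd}, each odd symplectic character $\sp^{(n,1)}_{\lambda}(X|z)$ equals $\spo_{\lambda}(X, z^{-1}; -z^{-1})$ evaluated with $n+1$ symplectic variables and one hook variable, and similarly for $\sp^{(m,1)}_{(r^{m-n})\cup\lambda}(Y|z)$. Since the hook variable is the same ($-z^{-1}$) on both sides, \cref{cor:m=1} (the $m=1$ specialization of \cref{thm:main}) gives an explicit single-determinant expression for each factor whose denominator is $D = \prod_i(x_i-\x_i)\prod_{i<j}(x_i+\x_i-x_j-\x_j)$ in the appropriate variables. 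The right-hand side $\sp_{(r^{m+n+1})}(X,Y,z)$ is a rectangular symplectic Schur function in $m+n+1$ variables, which by \eqref{spdef}--\eqref{spdenom} is also a ratio of determinants. So the whole identity becomes an equality between a sum of products of two determinants and a single determinant, with all denominators being Weyl-type products.

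The key steps, in order. First, I would record the numerator determinants: for the $n+1$-variable factor, \cref{cor:m=1} (with $\lambda$ replaced by $(r^{m-n})$-independent part handled separately) gives $\det\!\bigl(x_i^{\lambda_j+n+1-j+1}-\x_i^{\lambda_j+n+1-j+1} - z^{-1}(x_i^{\lambda_j+n+1-j}-\x_i^{\lambda_j+n+1-j})\bigr)$ over the variables $x_1,\dots,x_n,z^{-1}$, and analogously an $m+1$-sized determinant for the other factor in the $y$-variables and $z^{-1}$. Second, I would strip the factor $z^{-r}$ and the rectangular shift: writing $(r^{m-n})\cup\lambda$, the first $m-n$ columns of the second determinant are ``constant'' columns $x_i^{r+\text{(shift)}}-\x_i^{r+\text{(shift)}} - \cdots$, and the remaining $n+1$ columns match the shape $\lambda$; the $z^{-r}$ absorbs into making the exponents symmetric about a rectangle. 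Third — the crucial step — I would apply the Cauchy--Binet / Lindström--Gessel--Viennot-style identity: summing the product of the two numerator determinants over all $\lambda$ with $\ell(\lambda)\le n+1$ and $\lambda_1\le r$ should collapse, via \cref{lem:separate} (the complementation of exponent sets inside $\{0,1,\dots,(m+n+1)+r-1\}$ or similar), into a single determinant of size $m+n+1$ whose rows are indexed by the combined variable set $(X,Y,z)$ and whose columns realize exactly the rectangular partition $(r^{m+n+1})$. Fourth, I would check that the denominators multiply correctly: $D_{(X,z^{-1})}\cdot D_{(Y,z^{-1})}$ against $D_{(X,Y,z)}$, using $\overline{z^{-1}}=z$ so that $z^{-1}$ and its inverse $z$ pair up, and that the cross-terms $(x_i+\x_i - y_j-\bar y_j)$ etc. are produced by the Cauchy--Binet step. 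Finally, match signs and the power of $z$.

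The main obstacle I expect is precisely the third step: identifying the correct Cauchy--Binet setup so that the sum over $\lambda$ telescopes to a single determinant. The subtlety is that the two factors have different sizes ($n+1$ and $m+1$), the shape $(r^{m-n})\cup\lambda$ couples them asymmetrically, and the summation variable $\lambda$ controls only part of each matrix. The right framework is likely to view each determinant as a minor of a larger ($(m+n+1)\times(r+\text{const})$ or doubly-infinite) matrix indexed by exponent sequences, so that $\sum_\lambda (\text{minor})_I \cdot (\text{minor})_{I^c}$ is a full-size determinant by the Laplace/Cauchy--Binet expansion; \cref{lem:separate} is exactly the combinatorial input guaranteeing that the index sets $I$ (coming from $\lambda$) and $I^c$ (coming from $\lambda'$, i.e. the complementary shape) partition the relevant interval. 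Getting the shift parameters and the role of $z^{-r}$ aligned so that $I$ and $I^c$ genuinely complement inside $\{0,1,\dots,m+n+r\}$ — rather than being off by one or by the rectangle width — is the delicate bookkeeping. Everything else (the row operations converting \cref{thm:main}-style matrices into the clean form of \cref{cor:m=1}, and the denominator arithmetic) is routine in the style already demonstrated in the proof of \cref{thm:main}.
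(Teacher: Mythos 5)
The paper does not actually prove this statement itself (it is quoted from Okada); what it proves, by precisely the strategy you are reaching for, is the parallel generalization \cref{thm:ole}, so that proof (which mirrors Okada's) is the benchmark. Your outline starts correctly: pass to determinantal expressions via \cref{cor: odd}, \cref{cor:m=1} and \cref{thm:odd}, and sum the products of numerator determinants over $\lambda$. But your crucial third step is misidentified. The exponent sets attached to the two factors are \emph{equal}, not complementary: for $\lambda$ inside the $(n+1)\times r$ box the first factor uses column exponents $\{\lambda_j+n+2-j\}\subseteq\{1,\dots,r+n+1\}$, and the shape $(r^{m-n})\cup\lambda$ contributes exactly the same set together with the fixed block $\{r+n+2,\dots,r+m+1\}$. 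Hence the correct mechanism is the Cauchy--Binet formula (\cref{lem:CB}) applied to two matrices with \emph{identical} column index sets, not a Laplace expansion into complementary minors, and \cref{lem:separate} plays no role here (it belongs to the proof of \cref{thm:main}). More importantly, Cauchy--Binet is not the end of the argument: $\det(XY^t)$ has entries given by geometric sums, i.e.\ rational functions $p(x_i,y_j,z,a_i,b_j)$ with an extra row and column coming from the shared odd variable $z$, and recognizing this determinant as (denominator factors times) the single $(m+n+1)$-size numerator of the rectangular symplectic character is a genuinely separate determinant evaluation --- exactly \cref{lem:P} (Okada's Lemma 3.2), whose variant \cref{lem:q} is what the paper proves for \cref{thm:ole}. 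Your proposal never identifies this ingredient, and no amount of ``collapse via complementation of exponent sets'' produces it; this is the heart of the proof and is missing.

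A second genuine gap is the case $m>n$: Cauchy--Binet requires the two matrices to have the same number of rows, so the argument you sketch can only be run when $m=n$ (factors of equal size). You flag the size mismatch as a difficulty, but the proposed fix (embedding both minors into one large matrix and Laplace-expanding) is not what works; the actual resolution, both in Okada's proof and in the paper's proof of \cref{thm:ole}, is downward induction on $n$: multiply the identity by $(x_1\cdots x_n\,y_1\cdots y_m)^r$, set $x_1=0$, and use the specialization lemmas \cref{lem:ind-sp} and \cref{lem:ind} to peel off a row of length $r$ from both sides. Without the rational-function determinant lemma and without this induction step, the proposal does not constitute a proof, although its first two steps and its denominator bookkeeping are consistent with the established route.
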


We {recall} some lemmas before presenting the generalization (see \cref{thm:ole}). 
{We first recall the Cauchy-Binet formula.}
Given an $m \times n$ matrix 
$X=(x_{i,j})_{{1 \leq i \leq m,
1 \leq j \leq n}}$ and a subset $B \subset [n]=\{1,\dots, n\}$ 
of column indices, 
{let $X[B]$ be the submatrix of $X$ consisting of
the columns indexed by elements of $B$.}
\begin{lem}
\label{lem:CB}
    For two $m \times n$ matrices $X$ and $Y$, we have 
    \[
   \sum_{B} \det X[B] \det Y[B] = \det(XY^t), 
    \]
    where the sum runs over $m$-elements subset of $[n]$.
\end{lem}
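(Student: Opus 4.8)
The plan is to prove the Cauchy--Binet identity in the usual way: expand $\det(XY^{t})$ by the Leibniz formula, distribute the resulting products over the internal sums coming from matrix multiplication, and regroup the surviving terms according to which set of $m$ columns of $X$ (equivalently of $Y$) is used. Since the identity is classical, the only substantive work is bookkeeping of signs. It is harmless to assume $m \le n$: if $m > n$ then $[n]$ has no $m$-element subset, the left side is an empty sum, and $\det(XY^{t}) = 0$ because $\operatorname{rank}(XY^{t}) \le n < m$, so there is nothing to prove.

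First I would substitute $(XY^{t})_{ij} = \sum_{k=1}^{n} x_{ik} y_{jk}$ into
\[
\det(XY^{t}) = \sum_{\sigma \in S_{m}} \sgn(\sigma) \prod_{i=1}^{m} (XY^{t})_{i,\sigma(i)},
\]
and expand the product of sums over all maps $\kappa \colon [m] \to [n]$, obtaining
\[
\det(XY^{t}) = \sum_{\kappa \colon [m] \to [n]} \Bigl( \prod_{i=1}^{m} x_{i,\kappa(i)} \Bigr) \Bigl( \sum_{\sigma \in S_{m}} \sgn(\sigma) \prod_{i=1}^{m} y_{\sigma(i),\kappa(i)} \Bigr).
\]
The inner sum over $\sigma$ is, by definition, the determinant of the $m \times m$ matrix whose $i$-th column is the $\kappa(i)$-th column of $Y$; in particular it vanishes whenever $\kappa$ is not injective, so only injective $\kappa$ contribute. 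Each injective $\kappa$ corresponds uniquely to a pair $(B, \rho)$ with $B = \{b_{1} < \cdots < b_{m}\}$ an $m$-element subset of $[n]$ and $\rho \in S_{m}$ determined by $\kappa(i) = b_{\rho(i)}$.

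Next I would reindex the inner sum by $\sigma = \tau \rho$ to rewrite it as $\sgn(\rho) \det Y[B]$, and note that $\prod_{i} x_{i,\kappa(i)} = \prod_{i} x_{i,b_{\rho(i)}}$; summing over $\rho$ then uses $\sum_{\rho \in S_{m}} \sgn(\rho) \prod_{i} x_{i,b_{\rho(i)}} = \det X[B]$, and summing over $B$ gives $\det(XY^{t}) = \sum_{B} \det X[B]\, \det Y[B]$. The one place that requires care is precisely this reindexing: one must check that replacing $\sigma$ by $\tau = \sigma\rho^{-1}$ contributes exactly the factor $\sgn(\rho)$ that is needed to turn $\sum_{\rho} \sgn(\rho) \prod_{i} x_{i,b_{\rho(i)}}$ into a determinant rather than a permanent. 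An alternative route I would keep in reserve, which avoids this, is the block-matrix proof: apply the Schur-complement identity to $\left(\begin{smallmatrix} I_{n} & -Y^{t} \\ X & 0 \end{smallmatrix}\right)$ to see that its determinant equals $\det(XY^{t})$, then expand the same determinant by the generalized Laplace rule along the last $m$ rows; the nonvanishing maximal minors are indexed exactly by the $m$-subsets $B \subseteq [n]$, and once all the Laplace signs are collected they contribute precisely $\det X[B] \det Y[B]$.
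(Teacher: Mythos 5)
Your proposal is correct: the Leibniz-expansion argument (expand $\det(XY^{t})$, note that non-injective index maps $\kappa$ kill the inner determinant, split each injective $\kappa$ into an $m$-subset $B$ and a permutation $\rho$, and check that the reindexing $\sigma=\tau\rho$ produces exactly the factor $\sgn(\rho)$ needed to assemble $\det X[B]$) is the standard proof of the Cauchy--Binet formula, and your handling of the degenerate case $m>n$ is also right. There is nothing to compare against in the paper itself: \cref{lem:CB} is recalled there as a classical fact and is stated without proof, so your argument simply supplies a correct proof of a result the paper takes as known; either of your two routes (Leibniz expansion or the block-matrix/Laplace argument) would do.
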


\begin{lem}[{\cite[Lemma 3.1(b)]{Okada2019ABF}}]
\label{lem:ind-sp}
      Let $\lambda$ be a partition of length at most $n$ and $\lambda_1 \leq r$.
    Then 
    \[
    (x_1\cdots x_n)^r \sp_{\lambda}(X) 
    \]
    is a polynomial in $x_1,\dots,x_n$, and we have 
    \begin{equation*}
        \begin{split}
          [(x_1\cdots x_n)^r \sp_{\lambda}(X)]&|_{x_1=0}\\
        = &\begin{cases}
            (x_2 \cdots x_n)^r \sp_{(\lambda_2,\dots,\lambda_n)}(x_2,\dots,x_n) & \text{ if } \lambda_1=r,\\
            0 & \text{ otherwise,}
        \end{cases} 
        \end{split}
    \end{equation*}
{where $F|_{x_1=0}$ is the polynomial obtained by substituting
$x_1=0$ in $F$}.
\end{lem}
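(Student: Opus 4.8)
The plan is to combine the Weyl character formula \eqref{spdef} with the observation that $(x_1\cdots x_n)^r\sp_\lambda(X)$ is an honest polynomial. For the latter I would read polynomiality off the tableau description: in a symplectic tableau of shape $\lambda$ the entries strictly increase down columns, so each of the letters $i,\overline{i}$ occurs at most once per column and hence at most $\lambda_1$ times in all; consequently the exponent of $x_i$ in every monomial of $\sp_\lambda(X)$ lies in $\{-\lambda_1,\dots,\lambda_1\}$, and since $\lambda_1\le r$ the factor $(x_1\cdots x_n)^r$ clears all denominators. This also disposes of the case $\lambda_1\neq r$ at once: then $\lambda_1<r$, so $(x_1\cdots x_n)^r\sp_\lambda(X)=(x_1\cdots x_n)^{\,r-\lambda_1}\cdot\bigl((x_1\cdots x_n)^{\lambda_1}\sp_\lambda(X)\bigr)$ is divisible by $x_1$ and therefore vanishes at $x_1=0$.

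For the principal case $\lambda_1=r$, set $\ell_j=\lambda_j+n-j+1$ and $\delta_j=n-j+1$ for $1\le j\le n$, and write $a^{(N)}_\mu(z_1,\dots,z_N)=\det\bigl(z_i^{\mu_j}-z_i^{-\mu_j}\bigr)_{1\le i,j\le N}$, so that \eqref{spdef} reads $\sp_\lambda(X)=a^{(n)}_{(\ell_1,\dots,\ell_n)}(X)/a^{(n)}_{(\delta_1,\dots,\delta_n)}(X)$ and hence $a^{(n)}_{(\ell_1,\dots,\ell_n)}(X)=a^{(n)}_{(\delta_1,\dots,\delta_n)}(X)\,\sp_\lambda(X)$ in $\mathbb{Z}[x_1^{\pm1},\dots,x_n^{\pm1}]$ (since $\sp_\lambda(X)$ is a Laurent polynomial). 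Multiplying this identity by $x_1^{\,n+r}=x_1^{\ell_1}$ (using $n+r=n+\lambda_1=\ell_1$) gives the polynomial identity
\[
x_1^{\ell_1}\,a^{(n)}_{(\ell_1,\dots,\ell_n)}(X)=\bigl(x_1^{\,n}\,a^{(n)}_{(\delta_1,\dots,\delta_n)}(X)\bigr)\cdot\bigl(x_1^{\,r}\,\sp_\lambda(X)\bigr),
\]
in which all three factors are polynomials in $x_1$ (the most negative power of $x_1$ occurring in $a^{(n)}_\mu$ is $x_1^{-\mu_1}$, coming from the first row). I would then expand $a^{(n)}_{(\ell_1,\dots,\ell_n)}$ and $a^{(n)}_{(\delta_1,\dots,\delta_n)}$ by their first rows and set $x_1=0$: since $\ell_1>\ell_j$ and $\delta_1>\delta_j$ for $j\ge2$, only the term attached to the $(1,1)$-minor survives, yielding
\[
x_1^{\ell_1}a^{(n)}_{(\ell_1,\dots,\ell_n)}(X)\big|_{x_1=0}=-a^{(n-1)}_{(\ell_2,\dots,\ell_n)}(x_2,\dots,x_n),\qquad x_1^{\,n}a^{(n)}_{(\delta_1,\dots,\delta_n)}(X)\big|_{x_1=0}=-a^{(n-1)}_{(\delta_2,\dots,\delta_n)}(x_2,\dots,x_n).
\]
Because $\ell_j=\lambda_j+(n-1)-(j-1)+1$ and $\delta_j=(n-1)-(j-1)+1$ for $j\ge2$, these are exactly the numerator and denominator of the Weyl character formula \eqref{spdef} for $\sp_{(\lambda_2,\dots,\lambda_n)}(x_2,\dots,x_n)$ in the $n-1$ variables $x_2,\dots,x_n$.

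Substituting $x_1=0$ into the displayed polynomial identity and cancelling the common nonzero factor $-a^{(n-1)}_{(\delta_2,\dots,\delta_n)}(x_2,\dots,x_n)$ (in the field of fractions) then gives $\bigl(x_1^{\,r}\sp_\lambda(X)\bigr)\big|_{x_1=0}=\sp_{(\lambda_2,\dots,\lambda_n)}(x_2,\dots,x_n)$, and multiplying through by the monomial $(x_2\cdots x_n)^r$ yields the asserted formula. I do not expect a genuine obstacle; the only points requiring care are the sign and index bookkeeping in the two first-row Laplace expansions and the routine justification that one may set $x_1=0$ in the polynomial identity before dividing. If a more combinatorial argument were preferred, the same conclusion follows by noting that the coefficient of $x_1^{-r}$ in $\sp_\lambda(X)$ is the weighted count in $x_2,\dots,x_n$ of the tableaux in $\SPT_n(\lambda)$ having no $1$ and exactly $r$ copies of $\overline{1}$; such tableaux exist only when $\lambda_1=r$, in which case (since column entries strictly increase and the only letter smaller than $\overline1$ is $1$) they are precisely those whose first row is $\overline{1}\,\overline{1}\cdots\overline{1}$, and erasing that row while relabelling every entry $i,\overline{i}$ as $i-1,\overline{i-1}$ is a weight-preserving bijection onto $\SPT_{n-1}(\lambda_2,\dots,\lambda_n)$.
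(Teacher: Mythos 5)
Your argument is correct. Note that the paper does not actually prove this lemma---it is imported from Okada \cite{Okada2019ABF}---but your proof for the case $\lambda_1=r$ (clear the denominator by multiplying the Weyl-type determinants \eqref{spdef} by the appropriate power of $x_1$, set $x_1=0$, and observe that only the $(1,1)$ cofactor survives, reducing both numerator and denominator to the $(n-1)$-variable Weyl data for $(\lambda_2,\dots,\lambda_n)$) is essentially the same mechanism the paper uses to prove the orthosymplectic analogue, \cref{lem:ind}, where the rows of the matrix $A_\lambda(X)$ are multiplied by $x_i^{r+n}$ and the evaluation at $x_1=0$ kills all first-row entries except a $-1$ in position $(1,1)$ exactly when $\lambda_1=r$. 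The points where you diverge are minor but sound: you establish polynomiality of $(x_1\cdots x_n)^r\sp_\lambda(X)$ from the tableau description (exponents of each $x_i$ lie in $[-\lambda_1,\lambda_1]$) rather than from exponent inequalities in the determinant entries, you dispose of the case $\lambda_1<r$ by divisibility by $x_1^{r-\lambda_1}$, and you add a purely combinatorial alternative (first row forced to be all $\overline{1}$'s, then row removal and relabelling) that the determinantal method does not use; all of these are valid, and the one step needing care---evaluating the polynomial identity at $x_1=0$ before cancelling $-a^{(n-1)}_{(\delta_2,\dots,\delta_n)}(x_2,\dots,x_n)$, which is nonzero by \eqref{spdenom}---is handled correctly.
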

We need the following generalization of \cref{lem:ind-sp} in the proof of \cref{thm:ole}.
\begin{lem}
\label{lem:ind}
    Let $\lambda$ be a partition of length at most $n$ and $\lambda_1 \leq r$.
    Then 
    \[
    (x_1\cdots x_n)^r \spo_{\lambda}(X;z) 
    \]
    is a polynomial in $x_1,\dots,x_n$, and we have 
    \begin{equation*}
        \begin{split}
          [(x_1\cdots x_n)^r \spo_{\lambda}(X;z)]&|_{x_1=0}\\
        = &\begin{cases}
            (x_2 \cdots x_n)^r \spo_{(\lambda_2,\dots,\lambda_n)}(x_2,\dots,x_n;z) & \text{ if } \lambda_1=r,\\
            0 & \text{ otherwise,}
        \end{cases} 
        \end{split}
    \end{equation*}
    {where $F|_{x_1=0}$ is the polynomial obtained by substituting
$x_1=0$ in $F$}.
\end{lem}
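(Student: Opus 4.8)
The plan is to reduce the statement about orthosymplectic characters to the already-known symplectic statement (\cref{lem:ind-sp}) by exploiting the expansion
\[
\spo_{\lambda}(X;z) = \sum_{\mu} \sp_{\mu}(X)\, \s_{\lambda'/\mu'}(z),
\]
recorded in \cref{sec:prelim}. First I would observe that $\s_{\lambda'/\mu'}(z)$, being a skew Schur polynomial in the single variable $z$, is nonzero only when $\lambda'/\mu'$ is a horizontal strip, i.e. when $\mu \subseteq \lambda$ and $\lambda/\mu$ is a vertical strip; in that case it equals $z^{|\lambda/\mu|}$. In particular every $\mu$ occurring in the sum satisfies $\mu \subseteq \lambda$, hence $\ell(\mu)\le n$ and $\mu_1 \le \lambda_1 \le r$, so $(x_1\cdots x_n)^r \sp_{\mu}(X)$ is a polynomial by \cref{lem:ind-sp}. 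Multiplying through by $(x_1\cdots x_n)^r$ and summing over the finitely many such $\mu$ then shows $(x_1\cdots x_n)^r\spo_{\lambda}(X;z)$ is a polynomial in $x_1,\dots,x_n$ (with coefficients in $\mathbb{Z}[z]$), which is the first assertion.

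For the evaluation at $x_1=0$, I would set $x_1=0$ termwise:
\[
[(x_1\cdots x_n)^r\spo_{\lambda}(X;z)]\big|_{x_1=0}
= \sum_{\mu} [(x_1\cdots x_n)^r\sp_{\mu}(X)]\big|_{x_1=0}\; z^{|\lambda/\mu|},
\]
the sum over $\mu\subseteq\lambda$ with $\lambda/\mu$ a vertical strip. By \cref{lem:ind-sp}, the $\mu$-term vanishes unless $\mu_1=r$, and since $\mu_1\le\lambda_1\le r$ this forces $\lambda_1=r$ and $\mu_1=r$; when $\lambda_1<r$ every term dies and we get $0$, as claimed. When $\lambda_1=r$, the surviving $\mu$ are exactly those with $\mu_1=r$, $\mu\subseteq\lambda$, and $\lambda/\mu$ a vertical strip; for such $\mu$, \cref{lem:ind-sp} gives $[(x_1\cdots x_n)^r\sp_{\mu}(X)]|_{x_1=0}=(x_2\cdots x_n)^r\sp_{\hat\mu}(x_2,\dots,x_n)$ where $\hat\mu=(\mu_2,\dots,\mu_n)$.

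It then remains to match the resulting sum with $(x_2\cdots x_n)^r\spo_{(\lambda_2,\dots,\lambda_n)}(x_2,\dots,x_n;z)$. Writing $\hat\lambda=(\lambda_2,\dots,\lambda_n)$, the expansion formula gives $\spo_{\hat\lambda}(x_2,\dots,x_n;z)=\sum_{\nu}\sp_{\nu}(x_2,\dots,x_n)z^{|\hat\lambda/\nu|}$ over $\nu\subseteq\hat\lambda$ with $\hat\lambda/\nu$ a vertical strip. The key combinatorial point — and the step I expect to be the main obstacle — is the bijection $\mu\mapsto\hat\mu=\nu$ between (a) partitions $\mu$ with $\mu_1=r$, $\mu\subseteq\lambda$, $\lambda/\mu$ a vertical strip, and (b) partitions $\nu\subseteq\hat\lambda$ with $\hat\lambda/\nu$ a vertical strip, under which $|\lambda/\mu|=|\hat\lambda/\nu|$. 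This is where the hypothesis $\lambda_1=r$ is used: since $\mu_1=r=\lambda_1$, the first column of the vertical strip $\lambda/\mu$ contributes nothing in row $1$, so deleting the first row of both $\lambda$ and $\mu$ is a weight-preserving bijection onto the pairs for $\hat\lambda$, and $|\lambda/\mu|=|\hat\lambda/\hat\mu|$ follows since row $1$ contributes $\lambda_1-\mu_1=0$. Substituting this bijection into the sum completes the identification and hence the proof. I would double-check the vertical-strip condition is preserved (it is: $\mu_i\le\lambda_i$ and $\lambda_i\le\mu_{i-1}$ for $i\ge 2$ translate directly to the analogous inequalities for $\hat\mu\subseteq\hat\lambda$), and that no $\nu$ with $\nu_1>\hat\lambda_1$ intrudes, which cannot happen since $\nu\subseteq\hat\lambda$.
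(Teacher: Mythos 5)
Your argument is correct, but it follows a genuinely different route from the paper. The paper proves this lemma by linear algebra, mirroring Okada's proof of \cref{lem:ind-sp}: it starts from the determinantal formula of \cref{cor:m=1}, multiplies the $i$-th row by $x_i^{r+n}$, checks that all exponents $\lambda_j+r+2n-j+1>\lambda_j+r+2n-j>r-\lambda_j+j>r-\lambda_j+j-1\ge 0$ are nonnegative (giving polynomiality), and then observes that at $x_1=0$ the first row survives only in the entry with $j=1$ and $\lambda_1=r$, so the determinant collapses to the $(n-1)\times(n-1)$ determinant for $(\lambda_2,\dots,\lambda_n)$ after matching the denominators. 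You instead reduce to \cref{lem:ind-sp} through the branching rule $\spo_{\lambda}(X;z)=\sum_{\mu}\sp_{\mu}(X)\,\s_{\lambda'/\mu'}(z)$, evaluating termwise and matching the surviving sum with the branching of $\spo_{(\lambda_2,\dots,\lambda_n)}$ via the delete-the-first-row bijection; this is shorter and leverages the already-known symplectic statement, at the cost of relying on the combinatorial expansion (stated but not proved in the paper) and a small bijection argument, whereas the paper's route is self-contained given \cref{cor:m=1}, which it has just established and which is in any case the formula used later in the proof of \cref{thm:ole}. One small slip: the inequalities you quote when checking the strip condition, $\mu_i\le\lambda_i$ and $\lambda_i\le\mu_{i-1}$, characterize a \emph{horizontal} strip $\lambda/\mu$; the relevant vertical-strip condition is $0\le\lambda_i-\mu_i\le 1$ for all $i$ (e.g.\ $\lambda=(2,2)$, $\mu=(1,1)$ is a vertical strip violating $\lambda_2\le\mu_1$). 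This does not affect your argument, since the correct rowwise condition is equally stable under deleting the top row and under prepending a row of length $r=\lambda_1\ge\lambda_2$, so the bijection and the equality $|\lambda/\mu|=|\hat\lambda/\hat\mu|$ stand as you describe.
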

\begin{proof}
By \cref{cor:m=1}, we have 
\[
\spo_{\lambda}(X;z) =
\frac{1}{D}
\det \left(
  x_i^{\lambda_j+n-j+1}-\x_i^{\lambda_j+n-j+1}+
  z \left(x_i^{\lambda_j+n-j} - \x_i^{\lambda_j+n-j}\right) 
\right),
\]   
where $D= 
\prod_{i=1}^{n}
    (x_i-\x_i)
    \prod_{1 \leq i<j\leq n} 
(x_i+\x_i-x_j-\x_j)$.
Let $A_{\lambda}(X) \coloneqq (a_{i,j})_{1 \leq i,j \leq n},$ {where} 
\begin{equation*}
    \begin{split}
        a_{i,j}= x_i^{r+n} \left(x_i^{\lambda_j+n-j+1}-\x_i^{\lambda_j+n-j+1}+
  z \left(x_i^{\lambda_j+n-j} - \x_i^{\lambda_j+n-j}\right)\right)&\\
 = x_i^{\lambda_j+r+2n-j+1}-x_i^{r-\lambda_j+j-1}+
  &z \left(x_i^{\lambda_j+r+2n-j} - x_i^{r-\lambda_j+j}\right).
    \end{split}
\end{equation*}
Then
\[
(x_1\cdots x_n)^r \spo_{\lambda}(X;z) = 
\frac{\det A_{\lambda}(X)}
{ x_1^n \cdots x_n^n
} \times \frac{1}{D}.
\]
Since 
{\[
\lambda_j+r+2n-j+1 > \lambda_j+r+2n-j > r-\lambda_j+j >
r-\lambda_j+j-1 \geq 0,
\]}
$(x_1 \cdots x_n)^r \spo_{\lambda}(X;z)$ is a polynomial in $x_1, \dots, x_n$. 
Also, note that $r-\lambda_j+j-1=0$ if and only if $j=1$ and $\lambda_1=r$. Therefore, 
\[
a_{1,j}|_{x_1=0}=\begin{cases}
    -1 & \lambda_1=r \text{ and } j=1,\\
    0 & \text{ otherwise,}
\end{cases}
\]
{and}
\[
(\det A_{\lambda}(X))|_{x_1=0} = 
\begin{cases}
-\det \left(a_{i+1,j+1}\right)_{1 \leq i,j \leq n-1} & \lambda_1=r, \\
0 & \text{ otherwise.}
\end{cases}
\]
Observe that 
\[
\det \left(a_{i+1,j+1}\right)_{1 \leq i,j \leq n-1}
=
(x_2 \cdots x_n) \det A_{(\lambda_2,\dots,\lambda_n)}(x_2,\dots,x_n).
\]
If $\lambda_1=r$, then 
we have 
\begin{equation*}
        \begin{split}
          [(x_1\cdots x_n)^r \spo_{\lambda}(X;z)]|_{x_1=0}
        &  = \frac{-(x_2 \cdots x_n) \det A_{(\lambda_2,\dots,\lambda_n)}(x_2,\dots,x_n)}{-(x_2 \cdots x_n)^n \prod_{i=2}^n (x_i-\x_i)
        \prod_{2 \leq i<j\leq n} 
(x_i+\x_i-x_j-\x_j)} 
          \\
      &  =  (x_2 \cdots x_n)^r \spo_{(\lambda_2,\dots,\lambda_n)}(x_2,\dots,x_n;z). 
         \end{split}
    \end{equation*}
This completes the proof.
\end{proof}
\begin{lem}[{\cite[Lemma 3.2]{Okada2019ABF}}]
\label{lem:P}
 Let $p(x,y,z,a,b)$ be {the} rational function in $x$, $y$, $z$, $a$ and $b$ given by
 \begin{equation*}
\label{p-1}
    \begin{split}
        p(x,y,&z,a,b)=\\
    & \frac{(1-xz)(1-yz)}{1-xy} -
    a \frac{(x-z)(1-yz)}{x-y} +
    b \frac{(1-xz)(y-z)}{x-y} -
    ab \frac{(x-z)(y-z)}{1-xy}.
    \end{split}
\end{equation*}
Let $\mathbf{x}=(x_1,\dots,x_n),$  $\mathbf{y}=(y_1,\dots,y_n),$ $\mathbf{a}=(a_1,\dots,a_n),$ $\mathbf{b}=(b_1,\dots,b_n),$ $z$ and $c$ be determinates. We define an $(n+1) \times (n+1)$ matrix $C=C(\mathbf{x},\mathbf{y},z;\mathbf{a},\mathbf{b},c)=(C_{i,j})_{1 \leq i,j \leq n+1}$ by putting
\[
C_{i,j}= \begin{cases}
    p(x_i,y_j,z,a_i,b_j) & 1 \leq i,j \leq n, \\
    1-a_i & 1 \leq i \leq n, j=n+1, \\
    1-b_j & i=n+1, 1 \leq j \leq n, \\
  \ds  \frac{1-c}{1-z^2} & i=j=n+1,
\end{cases}
\]
and a $(2n+1) \times (2n+1)$ matrix $V=V(\mathbf{x},\mathbf{y},z;\mathbf{a},\mathbf{b},c)=(V_{i,j})_{1 \leq i,j \leq 2n+1}$ by putting 
\[
V_{i,j}=\begin{cases}
    x_i^{j-1} - a_i x_i^{2n+1-j} & 1 \leq i \leq n,\\
    y_{i-n}^{j-1} - b_{i-n} y_{i-n}^{2n+1-j} & n+1 \leq i \leq 2n,\\
    z^{j-1}-cz^{2n+1-j} & i=2n+1,
\end{cases}
\]
for $1 \leq j \leq 2n+1$.
Then 
\begin{equation}
\label{C}
    \det C = 
\frac{(-1)^n}{\ds (1-z^2) \prod_{i=1}^n \prod_{j=1}^n (x_i-y_j)(1-x_iy_j)} \det V.
\end{equation}
 
\end{lem}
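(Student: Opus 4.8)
The plan is a direct linear-algebra argument built on the Cauchy--Binet formula (\cref{lem:CB}), whose engine is the following observation: once its Cauchy-type poles are cleared, the kernel $p$ is essentially a $2\times 2$ determinant. First I would verify the polynomial identity
\[
(x-y)(1-xy)\,p(x,y,z,a,b)=u_1(x,a)\,v_1(y,b)-u_2(x,a)\,v_2(y,b),
\]
with $u_1=x(1-xz)-a(x-z)$, $u_2=(1-xz)-ax(x-z)$, $v_1=(1-yz)-by(y-z)$, $v_2=y(1-yz)-b(y-z)$, so that, writing $\vec w(x,a)=\big(v_1(x,a),v_2(x,a)\big)$ and noting $u_1=v_2$, $u_2=v_1$ as bivariate functions, $p$ equals $(y-x)^{-1}(1-xy)^{-1}$ times the $2\times2$ determinant with columns $\vec w(x,a)$ and $\vec w(y,b)$. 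The identity itself drops out by expanding $p\cdot(x-y)(1-xy)$ and collecting the coefficients of $1$, $a$, $b$, $ab$, which assemble into $(\phi_2-a\phi_3)(\psi_1-b\psi_4)-(\phi_1-a\phi_4)(\psi_2-b\psi_3)$ in the shorthand $\phi_1=1-xz$, $\phi_2=x(1-xz)$, $\phi_3=x-z$, $\phi_4=x(x-z)$ and the mirror quantities in $y$. I would also record the degenerate specialisations of this bilinear form --- the pair $(z,c)$ put in place of $(y,b)$ or of $(x,a)$, with $1-z^2$ cancelled --- that produce the border entries $1-a_i$, $1-b_j$ and $\tfrac{1-c}{1-z^2}$ of $C$; these are exactly the features that make $C$ an $(n+1)\times(n+1)$ object matched by the $(2n+1)$-st row of $V$.

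Next I would exhibit $C$, up to the scalar prefactor in \eqref{C}, as a product $P\,\Delta\,Q^{t}$, where $P$ and $Q$ are $(n+1)\times(2n+1)$ matrices whose rows are rows of $V$ --- the $x$-rows and the $z$-row making up $P$, and the $y$-rows together with one auxiliary row making up $Q$ --- and $\Delta$ is a fixed $(2n+1)\times(2n+1)$ matrix depending only on $z$ that packages both the $2\times2$-determinant splitting of Step~1 and the expansion of the kernel $\tfrac{1}{(x-y)(1-xy)}$ (equivalently, the Cauchy-type determinant $\det\big(\tfrac{1}{(x_i-y_j)(1-x_iy_j)}\big)$). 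Absorbing $\Delta$ into one factor and applying \cref{lem:CB} then gives
\[
(1-z^2)\Big(\textstyle\prod_{i=1}^n\prod_{j=1}^n(x_i-y_j)(1-x_iy_j)\Big)\det C=\sum_{S}\det P[S]\,\det Q[S],
\]
the sum over all $(n+1)$-element subsets $S\subseteq[2n+1]$.

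The third step identifies that sum with $(-1)^n\det V$. Laplace-expanding $\det V$ along the $n+1$ rows that form $P$ yields $\det V=\sum_{S}\varepsilon(S)\,\det P[S]\,\det V[\{y\text{-rows}\},S^{c}]$, in which $\det V[\{y\text{-rows}\},S^{c}]$ is the $n\times n$ minor on the complementary columns $S^{c}$; I would then invoke the complementary-minor (Grassmann duality) relation --- since the $y$-rows of $V$ are of Vandermonde type, an explicit basis of their annihilator has Cauchy-kernel entries, precisely the $y$-data sitting in $Q$, so that $\det V[\{y\text{-rows}\},S^{c}]=\pm\det Q[S]$ with a constant independent of $S$. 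Finally, both sides of \eqref{C} are Laurent polynomials of the same multidegree in each variable, so the remaining overall constant is pinned down by one specialisation: at $a_i=b_j=c=0$ the matrix $C$ collapses to the Cauchy matrix $\big(\tfrac{(1-x_iz)(1-y_jz)}{1-x_iy_j}\big)$ bordered by $1$'s with corner $\tfrac{1}{1-z^2}$, and $V$ collapses to a Vandermonde matrix; comparing the two closed forms delivers the sign $(-1)^n$.

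The main obstacle I anticipate is the coordination of the second and third steps: pinning down $\Delta$ so that the factorisation $C=\big((1-z^2)\prod_{i,j}(x_i-y_j)(1-x_iy_j)\big)^{-1}P\Delta Q^{t}$ is an exact identity, and then checking that the $S$-dependent signs produced one after another by Cauchy--Binet, by the Laplace expansion of $\det V$, and by the complementary-minor relation cancel completely, leaving only the uniform factor $(-1)^n$. The rank-two identity of Step~1 and the constant computation of Step~4 are, by comparison, routine term-by-term checks.
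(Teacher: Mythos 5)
First, note that the paper itself does not prove \cref{lem:P}: it is imported verbatim from Okada \cite[Lemma 3.2]{Okada2019ABF}, so your argument has to stand entirely on its own. Your Step 1 is correct and is a genuinely useful observation: one checks directly that $(x-y)(1-xy)\,p(x,y,z,a,b)=u_1(x,a)v_1(y,b)-u_2(x,a)v_2(y,b)$ with your $u_i,v_i$, and the border entries of $C$ are indeed the specialisations $(y,b)\to(z,c)$, $(x,a)\to(z,c)$ of $p$ divided by $1-z^2$. The fatal problem is Step 2. The claimed exact identity $C=\bigl((1-z^2)\prod_{i,j}(x_i-y_j)(1-x_iy_j)\bigr)^{-1}P\,\Delta\,Q^{t}$ with $\Delta$ depending only on $z$ cannot hold: the $(i,j)$ entry of $P\Delta Q^{t}$ is a polynomial in $x_i,a_i,y_j,b_j,z$ alone, while $C_{i,j}=p(x_i,y_j,z,a_i,b_j)$ has genuine poles along $x_iy_j=1$ and $x_i=y_j$ for generic $a_i,b_j$; moreover, multiplying by the global product $\prod_{i',j'}(x_{i'}-y_{j'})(1-x_{i'}y_{j'})$ would force the entry to depend on all the other variables, which the right-hand side does not. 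More fundamentally, \cref{lem:CB} requires the kernel to be a \emph{finite} separated (bilinear) sum in the $x$-data and $y$-data, and $1/((x-y)(1-xy))$ admits no such finite expansion when $a_i,b_j,c$ are independent indeterminates. The finite-sum structure you are pattern-matching on appears only after the specialisations $a_i=x_i^{2M}$, $b_j=y_j^{2M}$, which is precisely how Cauchy--Binet is legitimately used later, in the proof of \cref{thm:ole}; it is not available for \cref{lem:P} itself. Without an entrywise separated presentation of (a denominator-cleared version of) $C$, the identity $\det C\cdot(1-z^2)\prod_{i,j}(x_i-y_j)(1-x_iy_j)=\sum_S\det P[S]\det Q[S]$ has no source.

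Step 3 compounds this: in your Cauchy--Binet sum $Q$ is built from the $y$-rows of $V$ plus an unspecified auxiliary row, whereas the complementary-minor (Pl\"ucker duality) relation you invoke connects minors of the $y$-rows to minors of a basis of their \emph{annihilator}, a different $(n+1)\times(2n+1)$ matrix; as written, $\det V[\{y\text{-rows}\},S^{c}]$ is an $n\times n$ minor and $\det Q[S]$ is $(n+1)\times(n+1)$, and the identification hinges exactly on the auxiliary row about which the proposal is silent. Also, the normalisation in Step 4 (the case $a=b=c=0$) is itself a nontrivial bordered Cauchy-type evaluation, not a routine check. If you want an elementary self-contained proof, a more promising route is to exploit that both $(1-z^2)\prod_{i,j}(x_i-y_j)(1-x_iy_j)\det C$ and $(-1)^n\det V$ are affine-linear in each of $a_1,\dots,a_n,b_1,\dots,b_n,c$, reducing the identity to finitely many Cauchy/Vandermonde-type coefficient evaluations (or simply to reproduce Okada's proof), rather than forcing a Cauchy--Binet factorisation that does not exist at this level of generality.
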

\begin{lem} 
\label{lem:q}
Let $q(x,y,z,a,b)$ be {the} rational function in $x$, $y$, $z$, $a$ and $b$ given by
\begin{equation}
\label{p}
    \begin{split}
        q(x,y,&z,a,b)=\\
    & \frac{(1+xz)(1+yz)}{1-xy} -
   a \frac{(x+z)(1+yz)}{x-y} +
    b \frac{(1+xz)(y+z)}{x-y} -
    ab \frac{(x+z)(y+z)}{1-xy}.
    \end{split}
\end{equation}
Then 
\[
\det \left( q(x_i,y_j,z,a_i,b_j) \right)_{1 \leq i,j \leq n} = 
\frac{(-1)^n}{\ds \prod_{i=1}^n \prod_{j=1}^n (x_i-y_j)(1-x_iy_j)} \det W,
\]
where $W=(W_{i,j})_{1 \leq i,j \leq 2n+1}$ such that 
\begin{equation}
\label{W}
W_{i,j}=\begin{cases}
    x_i^{j-1} - a_i x_i^{2n+1-j} & 1 \leq i \leq n\\
    y_{i-n}^{j-1} - b_{i-n} y_{i-n}^{2n+1-j} & n+1 \leq i \leq 2n\\
    (-z)^{2n+1-j} & i=2n+1,
\end{cases}    
\end{equation}
for $1 \leq j \leq 2n+1$.
\end{lem}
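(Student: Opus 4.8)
The plan is to deduce \cref{lem:q} from \cref{lem:P} via the substitution $z \mapsto -z$ together with a comparison of the (affine-linear) $c$-dependence on the two sides. First I would observe that $q(x,y,z,a,b) = p(x,y,-z,a,b)$: replacing $z$ by $-z$ in the definition of $p$ turns $1-xz,\ 1-yz,\ x-z,\ y-z$ into $1+xz,\ 1+yz,\ x+z,\ y+z$ while fixing the denominators $1-xy$ and $x-y$, which is exactly the definition of $q$ in \eqref{p}. Hence $q(x_i,y_j,z,a_i,b_j) = p(x_i,y_j,-z,a_i,b_j)$, so $\left(q(x_i,y_j,z,a_i,b_j)\right)_{1\le i,j\le n}$ is precisely the top-left $n\times n$ block of $C(\mathbf{x},\mathbf{y},-z;\mathbf{a},\mathbf{b},c)$.

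Next I would invoke \cref{lem:P} with $z$ replaced by $-z$ (so $1-(-z)^2 = 1-z^2$), which gives
\[
\det C(\mathbf{x},\mathbf{y},-z;\mathbf{a},\mathbf{b},c) = \frac{(-1)^n}{(1-z^2)\prod_{i=1}^n\prod_{j=1}^n (x_i-y_j)(1-x_iy_j)}\,\det V(\mathbf{x},\mathbf{y},-z;\mathbf{a},\mathbf{b},c).
\]
The key point is that $c$ occurs \emph{affinely}, and in exactly one place, in each determinant: on the left only through the corner entry $C_{n+1,n+1}=\tfrac{1-c}{1-z^2}$, whose cofactor is $\det\left(q(x_i,y_j,z,a_i,b_j)\right)_{1\le i,j\le n}$ (with sign $+1$, since $(-1)^{(n+1)+(n+1)}=1$); on the right only through the last row $\left((-z)^{j-1}-c(-z)^{2n+1-j}\right)_{1\le j\le 2n+1}$ of $V(\mathbf{x},\mathbf{y},-z;\mathbf{a},\mathbf{b},c)$. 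Expanding the latter along its last row and using multilinearity, the coefficient of $c$ in $\det V(\mathbf{x},\mathbf{y},-z;\mathbf{a},\mathbf{b},c)$ equals $-\det W$, where $W$ is exactly the matrix of \eqref{W}: its rows $1,\dots,2n$ are those of $V(\cdot,-z)$ (which contain no $z$), and its last row is $\left((-z)^{2n+1-j}\right)_j$.

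Finally I would equate the coefficients of $c$ on the two sides of the displayed identity: the left contributes $-\tfrac{1}{1-z^2}\det\left(q(x_i,y_j,z,a_i,b_j)\right)_{1\le i,j\le n}$, while the right contributes $\dfrac{(-1)^n(-\det W)}{(1-z^2)\prod_{i,j}(x_i-y_j)(1-x_iy_j)}$; multiplying through by $-(1-z^2)$ yields precisely the formula of \cref{lem:q}. I do not expect any genuine obstacle: all the substance is already in \cref{lem:P}, and \cref{lem:q} is simply its $c$-linear part after $z\mapsto -z$. The only thing that needs care is the elementary sign and index bookkeeping in identifying $W$ with the $c$-coefficient block of $V(\cdot,-z)$ and in checking the cofactor of the corner entry of $C$; both are routine.
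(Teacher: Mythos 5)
Your proposal is correct and follows essentially the same route as the paper: both deduce \cref{lem:q} from \cref{lem:P} via the observation $q(x,y,z,a,b)=p(x,y,-z,a,b)$ and the fact that $c$ enters both $\det C$ and $\det V$ affinely, so that extracting the $c$-linear part (the paper writes this as $\tfrac{1-z^2}{-c}(\det C-\det C|_{c=0})$, you phrase it as comparing coefficients of $c$) yields the stated identity with $\det W$. The sign and cofactor bookkeeping you outline matches the paper's computation.
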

\begin{proof} Recall the notions of $p(x,y,z,a,b)$, $(n+1) \times (n+1)$ matrix $C$ and $(2n+1) \times (2n+1)$ matrix $V$ from \cref{lem:P}. 
    Observe that 
    \[
    q(x,y,z,a,b) = p(x,y,-z,a,b)
    \]
    and
    \[
    \det \left( q(x_i,y_j,-z,a_i,b_j) \right)_{1 \leq i,j \leq n} = \frac{(1-z^2)}{-c} \left( \det C - \det C|_{c=0} \right).
    \]
    Then by \eqref{C}, we have 
    \[
    \det \left( q(x_i,y_j,-z,a_i,b_j) \right)_{1 \leq i,j \leq n} =  \frac{(-1)^n}{\ds -c \prod_{i=1}^n \prod_{j=1}^n (x_i-y_j)(1-x_iy_j)} \left( \det V - \det V|_{c=0} \right)
    \]
    Therefore,
\[
    \det \left( q(x_i,y_j,z,a_i,b_j) \right)_{1 \leq i,j \leq n} =  \frac{(-1)^n}{\ds \prod_{i=1}^n \prod_{j=1}^n (x_i-y_j)(1-x_iy_j)} \det W,
    \]
    where $W$ is given in \eqref{W}. This completes the proof.
\end{proof}
\begin{thm}
\label{thm:ole}
Let $m$ and $n$ be positive integers with
$n \leq m$, and $r$ a nonnegative integer. Then we have
\begin{equation}
\label{BKW}
    \begin{split}
     \sum_{\lambda} z^r \spo_{\lambda}(X;z) \spo_{(r^{m-n}) \cup \lambda}&(Y;z) \\
     =  
    \sum_{j=1}^{m+n+1} & z^{r+m+n+1-j} \sp_{\big(\underbrace{r,\dots,r}_{j-1},
    \underbrace{r-1,\dots,r-1}_{m+n+1-j}\big)}(X,Y),  
    \end{split}
\end{equation}
where $\lambda$ runs over partitions of length at most $n$ such that $\lambda_1 \leq r$ and $(r^{m-n}) \cup \lambda$ 
{denotes} the partition 
$(\underbrace{r,\dots,r}_{m-n},\lambda_1,\dots,\lambda_{n})$. 
\end{thm}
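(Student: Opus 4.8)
The plan is to mimic Okada's linear-algebraic proof of the Brent--Krattenthaler--Warnaar identity, replacing the odd symplectic characters $\sp^{(n,1)}$ and $\sp^{(m,1)}$ throughout by orthosymplectic characters $\spo_\lambda(X;z)$. The key structural input is \cref{cor:m=1}, which expresses $\spo_\lambda(X;z)$ as a single $n\times n$ determinant divided by the Weyl-type denominator $D_X = \prod_i(x_i-\x_i)\prod_{i<j}(x_i+\x_i-x_j-\x_j)$; together with \cref{lem:ind}, which tells us how $(x_1\cdots x_n)^r\spo_\lambda(X;z)$ specializes at $x_1=0$ (it vanishes unless $\lambda_1=r$, and then reduces to the analogous quantity in the remaining variables). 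These two facts are the exact orthosymplectic analogues of Okada's \cref{lem:ind-sp}, so the combinatorial bookkeeping of his argument should carry over verbatim.

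First I would clear denominators: multiply both sides of \eqref{BKW} by $(x_1\cdots x_n)^r(y_1\cdots y_m)^r$, so that by \cref{lem:ind} each summand on the left becomes a genuine polynomial, and the sum over $\lambda$ (with $\ell(\lambda)\le n$, $\lambda_1\le r$) becomes a finite sum whose terms are products of two determinants of the shape furnished by \cref{cor:m=1}. Next I would recognize this double sum of products of determinants as the output of a Cauchy--Binet expansion (\cref{lem:CB}): the sum over the partition $\lambda$ (equivalently, over the strictly decreasing sequence of exponents $\lambda_j+n-j+1$, with an extra row-type index to account for the ``$+z(x^{\lambda_j+n-j}-\x^{\lambda_j+n-j})$'' term) is precisely a sum over column-index subsets $B$ of $\det X[B]\det Y[B]$ for two explicit rectangular matrices built from the $X$-block and the $Y$-block. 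Applying \cref{lem:CB} collapses the left-hand side to a single determinant $\det(XY^t)/(D_X D_Y)$ whose $(i,j)$ entry is a rational function obtainable in closed form by summing a geometric-type series in the shared variable indexed by $\lambda$; this is where the rational function $q(x,y,z,a,b)$ of \cref{lem:q} enters, with $a_i=\x_i^{?}$-type specializations coming from the two terms $x^{\bullet}-\x^{\bullet}$ in \cref{cor:m=1}.

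Then I would invoke \cref{lem:q} to rewrite $\det\big(q(x_i,y_j,z,a_i,b_j)\big)$ as a $(2n+1)\times(2n+1)$ (or, after incorporating the $m-n$ extra rows of $r$'s, a $(m+n+1)\times(m+n+1)$) determinant $\det W$, divided by a Cauchy-type product $\prod(x_i-y_j)(1-x_iy_j)$. The bottom row of $W$ carries the powers $(-z)^{m+n+1-j}$, and expanding $\det W$ along that row produces exactly the alternating sum $\sum_{j=1}^{m+n+1} z^{\bullet}\cdot(\text{symplectic-type minor})$; combined with \eqref{spdef}--\eqref{spdenom}, each such minor is $z^{\text{something}}$ times $\sp$ of a staircase partition $(r^{j-1},(r-1)^{m+n+1-j})$ in the joined alphabet $(X,Y)$, which is the right-hand side of \eqref{BKW}. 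I expect the main obstacle to be the bookkeeping in the Cauchy--Binet step: one must match the index set of partitions $\lambda$ on the left with the correct family of $(m+n+1)$-element column subsets, handle the doubling of rows caused by the $z(x^{\bullet}-\x^{\bullet})$ terms in \cref{cor:m=1}, and correctly track the prefactor $z^r$ and all the sign and power-of-$z$ shifts so that the extra block of $m-n$ rows of $r$'s in $(r^{m-n})\cup\lambda$ lands in the right place. Once the matrices $W$ and the two rectangular Cauchy--Binet factors are written down explicitly, verifying the entry-wise identities is a routine (if lengthy) computation using \eqref{(2)} and the elementary identities $y^p-y'^{\,p}=(y-y')h_{p-1}(y,y')$ already exploited in the proof of \cref{thm:main}.
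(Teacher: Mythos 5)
Your outline of the balanced case $m=n$ matches the paper's proof: form the $n\times(r+n)$ matrices with entries $x_i^{p}-\x_i^{p}+z(x_i^{p-1}-\x_i^{p-1})$ (note the $z$-term is simply absorbed into each matrix entry, so there is no ``doubling of rows'' to handle and no extra row-type index in the column sets), sum the products of $n\times n$ minors over the column sets $I_n(\lambda)=\{\lambda_j+n-j+1\}$ via \cref{lem:CB}, evaluate the entries of $XY^t$ by a geometric sum to obtain $\x_i^{M}\bar{y}_j^{M}q(x_i,y_j,z,x_i^{2M},y_j^{2M})$ with $M=r+n$, apply \cref{lem:q}, and expand the resulting $(2n+1)\times(2n+1)$ determinant along the $(-z)$-row using \eqref{spdef}--\eqref{spdenom}. (Clearing denominators by $(x_1\cdots x_n y_1\cdots y_m)^r$ is unnecessary at this stage; $D_xD_y$ simply factors out.)

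However, there is a genuine gap in your treatment of $n<m$. \cref{lem:CB} requires the two matrices to have the same number of rows, while on the left-hand side the $X$-minors coming from \cref{cor:m=1} are $n\times n$ and the $Y$-minors are $m\times m$, with $m-n$ frozen columns $\{r+n+1,\dots,r+m\}$ contributed by the block $(r^{m-n})$; ``incorporating the $m-n$ extra rows of $r$'s'' into a single $(m+n+1)\times(m+n+1)$ determinant is precisely the step you never construct, and a direct bordered Cauchy--Binet of this kind would require an extension of \cref{lem:q} that the paper does not provide. The paper instead proves \eqref{BKW} only for $m=n$ by the computation above, and then deduces the general case by downward induction on $n$: multiply both sides by $(x_1\cdots x_n y_1\cdots y_m)^r$ and set $x_1=0$; by \cref{lem:ind} and \cref{lem:ind-sp} only the terms with $\lambda_1=r$ survive, and since $(r^{m-n})\cup\lambda=(r^{m-n+1})\cup(\lambda_2,\dots,\lambda_n)$ for such $\lambda$, the identity for $(n,m)$ collapses to the identity for $(n-1,m)$. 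That induction is the actual purpose of \cref{lem:ind} (and of \cref{lem:ind-sp}); in your plan these lemmas are invoked only for polynomiality, so the case $n<m$ is left unproved.
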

\begin{proof} Assume $m=n$. Let $M=r+n$ and consider $n \times M$ matrices $X=(X_{i,p})_{\substack{1 \leq i \leq n\\
1 \leq p \leq M}}$ and $Y=(Y_{i,p})_{\substack{1 \leq i \leq n\\
1 \leq p \leq M}}$
such that
\[
\begin{cases}
  X_{i,p} &= \quad x_i^{p}-\x_i^{p} + z(x_i^{p-1}-\x_i^{p-1}), \\ 
  Y_{i,p} &= \quad y_i^{p}-\bar{y}_i^{p} + z(y_i^{p-1}-\bar{y}_i^{p-1}),
\end{cases}
\]
for $1 \leq i \leq n$, $1 \leq p \leq M$.
{By using the formula for the geometric sum, it is immediate that}
the {$(i,j)$-th} entry of $XY^t$ is given by 
\[
\x_i^{M} \bar{y}_j^{M} q(x_i,y_j,z,x_i^{2M},y_j^{2M}),
\]
where 
\begin{multline*}
    q(x,y,z,a,b)=\\
    \frac{(1+xz)(1+yz)}{1-xy} -
    a \frac{(x+z)(1+yz)}{x-y} +
    b \frac{(1+xz)(y+z)}{x-y} -
    ab \frac{(x+z)(y+z)}{1-xy}.
\end{multline*}
Therefore,
\begin{equation}
\label{xy}
    \det XY^t = (\x_1 \cdots \x_n)^{M} 
 (\bar{y}_1 \cdots \bar{y}_n)^{M}  
 \det \left( q(x_i,y_j,z,x_i^{2M},y_j^{2M}) \right).
\end{equation}
By \cref{lem:q}, we have
\begin{equation}
    \label{detq}
    \det \left( q(x_i,y_j,z,x_i^{2M},y_j^{2M}) \right) =
 \frac{(-1)^n}{\ds \prod_{i=1}^n \prod_{j=1}^n (x_i-y_j)(1-x_iy_j)} \det W^{(M)},
\end{equation}
where 
\[
\left(W^{(M)}\right)_{i,j} = \begin{cases}
    x_i^{j-1} - x_i^{2M+2n+1-j} & 1 \leq i \leq n\\
    y_{i-n}^{j-1} - y_{i-n}^{2M+2n+1-j} & n+1 \leq i \leq 2n\\
    (-z)^{2n+1-j} & i=2n+1,
\end{cases}
\]
for $1 \leq j \leq 2n+1$.
Suppose $I_n(\lambda)={\{}\lambda_1+n,\dots,\lambda_n+1{\}}$. Then by \cref{cor:m=1} and the Cauchy--Binet formula in \cref{lem:CB}, the left hand side of \eqref{BKW} is given by
\begin{equation}
\label{big}
    \begin{split}
     \sum_{\lambda} z^r \spo_{\lambda}(X;z) \spo_{ \lambda}(Y;z)  =&
   \ds \sum_{\lambda} \frac{z^r}{D_x D_y} 
    \det X[I_n(\lambda)]
    \, {\det Y[I_n(\lambda)]}  
    \\
    =   \frac{z^r }{D_x D_y}
     \det(XY^t) = &  \frac{(-1)^n z^r (\x_1 \cdots \x_n)^{M} 
 (\bar{y}_1 \cdots \bar{y}_n)^{M} }{D_x
D_y \ds \prod_{i=1}^n \prod_{j=1}^n (x_i-y_j)(1-x_iy_j) } \times  \det W^{(M)},
    \end{split}
\end{equation}
    where the last equality uses \eqref{xy} and \eqref{detq} and $D_x=\prod_{i=1}^{n}
    (x_i-\x_i)
    \prod_{1 \leq i<j\leq n} 
(x_i+\x_i-x_j-\x_j)$, $D_y=\prod_{i=1}^{n}
    (y_i-\bar{y}_i)
    \prod_{1 \leq i<j\leq n} 
(y_i+\bar{y}_i-y_j-\bar{y}_j)$. Simplifying \eqref{big}, we have 
    \[
 \sum_{\lambda} z^r \spo_{\lambda}(X;z) \spo_{ \lambda}(Y;z)  = \frac{(-1)^{r+n} \prod_{i=1}^n x_i^{n} y_i^{n}}{D_x D_y \ds \prod_{i=1}^n \prod_{j=1}^n (x_i-y_j)(1-x_iy_j)} 
    \det W_0,
    \]
    where 
    \[
    (W_0)_{i,j}=\begin{cases}
    x_i^{M+n+1-j} - \x_i^{M+n+1-j} & 1 \leq i \leq n,\\
   y_i^{M+n+1-j} - \bar{y}_i^{M+n+1-j} & n+1 \leq i \leq 2n,\\
    (-z)^{M+n+1-j} & i=2n+1,
\end{cases}
    \]
for $1 \leq j \leq 2n+1$. Note that
\[
\frac{\ds \prod_{i=1}^n \prod_{j=1}^n (x_i-y_j)(1-x_iy_j)}{\prod_{i=1}^n x_i^{n} y_i^{n}} = 
\prod_{i=1}^n \prod_{j=1}^n  (y_j+\bar{y}_j-x_i-\x_i)
= (-1)^{n^2} \prod_{i=1}^n \prod_{j=1}^n  (x_i+\x_i-y_j-\bar{y}_j).
\]
Therefore, 
\begin{equation*}
    \begin{split}
        \sum_{\lambda} z^r \spo_{\lambda}(X;z) \spo_{ \lambda}(Y;z) &= 
        \frac{(-1)^{r+n^2+n}}{D_x D_y \prod_{i=1}^n \prod_{j=1}^n  (x_i+\x_i-y_j-\bar{y}_j)} 
        \det W_0 \\
       & =
        \sum_{j=1}^{2n+1} 
        z^{r+2n+1-j}
        \sp_{\big(\underbrace{r,\dots,r}_{j-1},\underbrace{r-1,\dots,r-1}_{2n+1-j}\big)}(X,Y),
    \end{split}
\end{equation*}
where the last equality uses \eqref{spdef}. This completes the proof in the case $m=n$. Now we prove the general case by downward induction on   
$n$. 
Multiplying both sides of \eqref{BKW} by $(x_1\cdots x_n y_1\cdots y_m)^r$ and then substituting $x_1=0$, by \cref{lem:ind-sp} and \cref{lem:ind}, we have 
\begin{equation*}
    \begin{split}
     \sum_{\lambda} z^r (x_2\cdots x_n)^r \spo_{(\lambda_2,\dots,\lambda_n)}(x_2,\dots,x_n;z) (y_1\cdots y_m)^r \spo_{(r^{m-n}) \cup \lambda}(Y&;z) \\
     =  
    \sum_{j=2}^{m+n+1}  z^{r+m+n+1-j} (x_2 \cdots x_n y_1\cdots y_m)^r \sp_{\big(\underbrace{r,\dots,r}_{j-2},\underbrace{r-1,\dots,r-1}_{m+n+1-j}\big)} &(x_2,\dots,x_n,y_1,\dots,y_m),   
    \end{split}
\end{equation*}
where the summation on the left hand side is over partitions $\lambda$ of length at most $n$ such that $\lambda_1=r$. Since $(r^{m-n}) \cup \lambda=(r^{m-n+1}) \cup (\lambda_2,\dots,\lambda_n)$ for such a partition, we have 
\begin{equation*}
    \begin{split}
     \sum_{\mu} z^r \spo_{\mu}(x_2,\dots,x_n;z) & \spo_{(r^{m-n+1}) \, \cup \, \mu}(Y;z) \\
     =  
    \sum_{j=1}^{m+n} & z^{r+m+n-j}  \sp_{\big(\underbrace{r,\dots,r}_{j-1},\underbrace{r-1,\dots,r-1}_{m+n-j}\big)} (x_2,\dots,x_n,y_1,\dots,y_m),   
    \end{split}
\end{equation*}
where $\mu$ runs over all partitions of length at most $n-1$ such that $\mu_1 \leq r$. This completes the proof.
\end{proof}
\section*{Declarations}
The authors have no competing interests to declare that are relevant to the content of this article.
\section*{Acknowledgements}
We thank A. Ayyer and A. Stokke for very helpful discussions. We acknowledge support from the Department of Science and Technology grant CRG/2021/001592.
 \bibliographystyle{alpha}
  \bibliography{Bibliography}
\end{document}